\newcommand*\bigcdot{\mathpalette\bigcdot@{.7}}
\newcommand*\bigcdot@[2]{\mathbin{\vcenter{\hbox{\scalebox{#2}{$\m@th#1\bullet$}}}}}
\theoremstyle{plain}
\theoremstyle{definition}
\newtheorem{definition}{Definition}
\newtheorem{lemma}[definition]{Lemma}
\newtheorem{theorem}[definition]{Theorem}
\newtheorem{proposition}[definition]{Proposition}
\newtheorem{corollary}[definition]{Corollary}
\newtheorem{example}[definition]{Example}
\newtheorem{remark}[definition]{Remark}
\newtheorem{conjecture}[definition]{Conjecture}
\title{Periodic multivariate formal power series}
\author{Xue Zhang \thanks{xz.math@outlook.com}}
\begin{document}

\maketitle

\begin{abstract}
A system of multivariate formal power series $\varphi$ with a homogeneous decomposition $\varphi=\sum_{k=0}^\infty\varphi_k$ is invertible under composition if $\varphi_0=0$ and $\mathrm{det}(\varphi_1)\ne 0.$ All invertible series over a field $K$ form a formal transformation group $G_\infty(n,K).$ We prove that every periodic series $\varphi\in G_\infty(n,K)$ with $\varphi_1$ diagonalizable is conjugate to $\varphi_1.$ This classifies all periodic series in $G_\infty(n,\mathbb{C}).$ A constraint for a periodic series is obtained when its first term is a multiple of identity.
\end{abstract}

{Mathematical subject classification:} 13F25, 16W60, 47J07, 65Q30

\begin{keywords}
formal power series, period, conjugacy, recursive relation
\end{keywords}
%\tableofcontents
\section{Introduction}
The study of formal power series involves many areas of mathematics, for example analysis \cite{HCartan1963,Raney1960}, combinatorics \cite{Gessel1987,Labelle1981}, dynamical systems \cite{Lubin1994}, differential equations \cite{Zhaowh2005,Parvica2005} and the famous Jacobian conjecture \cite{HBass1982,Bondt2004,VEssen2021}. There are two kinds of multiplication of formal power series, usual multiplication and  composition. This note deals with periodic series in the group of formal power series in multiple variables with the composition law. 

 Given a commutative ring $R$ and a system of formal power series $\varphi=(f_{1},\cdots,f_{n})$ in $R[[x_{1},\cdots,x_{n}]]^{n}$ of the form $\varphi=\sum_{k=0}^{\infty}\varphi_{k}$ with $\varphi_{k}$ consisting of terms of degree $k.$ If $\varphi_{0}=0$ and $\varphi_1=(x_1,\cdots,x_n),$ then there is a unique series $\phi=(g_{1},\cdots,g_{n})\in R[[x_{1},\cdots,x_{n}]]^{n}$ such that $\varphi\circ\phi=\phi\circ\varphi=id,$ i.e. 
$$(f_{1}(\phi),\cdots,f_n(\phi))=(g_{1}(\varphi),\cdots,g_{n}(\varphi))=(x_{1},\cdots,x_{n}).$$
All invertible series in $R[[x_{1},\cdots,x_{n}]]^{n}$ form a non-abelian group, called \emph{formal transformation group}. The unit of this group is the identity $id=(x_{1},\cdots,x_{n}).$ The inverse of a invertible series is given by the Lagrange inversion formula, a family of integer polynomials that is independent of $R.$ There are many proofs of it, including Raney \cite{Raney1960}, Labelle \cite{Labelle1981},  Gessel \cite{Gessel1987}, Haiman-Schmitt \cite{Haiman1989} and  Johnston-Prochno \cite{Johnston2022}. In addition,  see \cite{Gessel1980} for the non-commutative case.

Let $R=K$ be a field and let $GL_n(K)$ denote the general linear group over $K.$ We consider the formal transformation group $G_\infty(H)$ of formal power series over $K$ with respect to a subgroup $H$ of $GL_n(K),$ which is given by 
$$G_{\infty}(H):=\left\{\varphi=\sum_{i=0}^\infty\varphi_i\in K[[x_{1},\cdots,x_{n}]]^{n}|\ \varphi_0=0,\varphi_1\in H\right\}.$$
Here $\sum_{i=0}^\infty\varphi_i$ is the unique homogeneous decomposition of $\varphi.$ When $H=GL_n(K),$ we write $G_\infty(n,K)$ for $G_\infty(GL_n(K)).$

A series in $G_\infty(n,K)$ is said to be \emph{periodic} if its order in $G_\infty(n,K)$ is finite and is said to be \emph{$m$-periodic} if its order equals to $m.$ For $n=1,$ it is well known that $\varphi=\omega_m x+\sum_{k=2}^\infty a_kx^k$ is $m$-periodic if and only if $\varphi$ is conjugate to $\omega_m x,$ where $\omega_m$ is a primitive $m$-th root of unit in $K.$ See, for example, \cite{SalomonB1948},\cite{Stephen1970}, \cite{Cheon2013} and \cite{Anthony2015}. Brewer \cite{Thomas2014} generalized this result to high dimension and proved that every $m$-periodic series $\varphi$ with $\varphi_1=\omega_m\cdot id$ is conjugate to $\varphi_1,$ where $\omega\cdot id$ is a multiple of the identity. 

This note generalizes Brewer's result to a more general case where $\varphi_1$ is a diagonalizable matrix.  Let $\mathrm{char}\,K$ denote the characteristic of $K.$ In general, two elements $g_1,g_2$ in a group $G$ are said to be \emph{conjugate} to each other if $g_1h=hg_2$ for some $h\in G.$ Now we state our main results.
\begin{theorem}\label{intro01}
If $\mathrm{char}\,K=0$ and $\varphi\in G_\infty(n,K)$ is periodic, then $\mathrm{ord}(\varphi)=\mathrm{ord}(\varphi_1).$	
\end{theorem}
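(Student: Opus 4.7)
The plan is to exploit the group homomorphism $\pi : G_\infty(n,K) \to GL_n(K)$ sending $\varphi \mapsto \varphi_1$, which is well-defined because taking the composition of two series multiplies their linear parts as matrices. Since $\pi$ is a homomorphism, periodicity of $\varphi$ forces $\varphi_1^{\mathrm{ord}(\varphi)} = I$, so $\mathrm{ord}(\varphi_1)$ divides $\mathrm{ord}(\varphi)$. The content of the theorem is the reverse divisibility, so set $d = \mathrm{ord}(\varphi_1)$ and consider $\psi := \varphi^{\circ d}$; then $\psi_1 = \varphi_1^d = I$ and $\psi$ is still periodic (its order divides $\mathrm{ord}(\varphi)/d$). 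It therefore suffices to prove the following key lemma: in characteristic zero, the only periodic element of $G_\infty(n,K)$ with linear part equal to the identity is $id$ itself.

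To prove this lemma I would argue by contradiction. Suppose $\psi \neq id$ with $\psi_1 = id$, and let $k \geq 2$ be the smallest index for which $\psi_k \neq 0$, so that $\psi = id + \psi_k + \psi_{k+1} + \cdots$. The core computation is to track the degree-$k$ part under iteration: substituting $\psi$ into itself and discarding terms of degree strictly greater than $k$, the homogeneous piece of degree $k$ in $\psi \circ \psi$ is $2\psi_k$, because the only contributions at degree $k$ come from the linear part acting on $\psi_k$ (twice). By induction on $r$, the degree-$k$ homogeneous component of $\psi^{\circ r}$ equals $r \cdot \psi_k$. If $\psi$ were $r$-periodic for some $r \geq 1$, then $r \cdot \psi_k = 0$; since $\mathrm{char}\,K = 0$, this forces $\psi_k = 0$, contradicting the choice of $k$. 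Hence $\psi = id$.

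Combining these pieces gives $\varphi^{\circ d} = id$, so $\mathrm{ord}(\varphi)$ divides $d = \mathrm{ord}(\varphi_1)$, and together with the easy direction this yields equality. The only step that requires any care is the inductive computation of the leading higher-degree term of $\psi^{\circ r}$; the essential point is that once $\psi_1 = id$, conjugation by the linear part is trivial and the first nonzero higher-degree term behaves additively under iteration, which is exactly the obstruction that characteristic zero removes. No assumption on $\varphi_1$ being diagonalizable is needed for this statement, only the characteristic hypothesis, which is used solely in the cancellation $r \cdot \psi_k = 0 \Rightarrow \psi_k = 0$.
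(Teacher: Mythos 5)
Your proposal is correct and takes essentially the same route as the paper: reduce to the case of identity linear part via the homomorphism $\varphi\mapsto\varphi_1$, then observe that the lowest-degree nonzero term beyond the identity accumulates additively under iteration (this is exactly the paper's formula (\ref{powerfork56})), so periodicity forces it to vanish in characteristic zero. The only cosmetic difference is that you argue by contradiction from a minimal nonzero index while the paper runs an induction killing each $\varphi_k$ in turn.
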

\begin{theorem}
Suppose $m\geq 2$ and $m\notin \mathrm{char}\,K\cdot\mathbb{Z}.$ Let $\varphi\in G_\infty(n,K)$ be a $m$-periodic series with $\varphi_1$ diagonalizable. Then $\varphi$ is conjugate to $\varphi_1,$ i.e. there exists $\phi\in G_\infty(n,K)$ such that $\varphi=\phi^{-1}\varphi_1 \phi.$
\end{theorem}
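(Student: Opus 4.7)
The plan is to construct the conjugator explicitly via a Bochner-style averaging over the cyclic group generated by $\varphi$; this is available because the hypothesis $m\notin\mathrm{char}\,K\cdot\mathbb{Z}$ makes $m$ invertible in $K$. Diagonalizability of $\varphi_{1}$ is actually not essential for this route---what we really use is $\varphi_{1}^{m}=I_{n}$, which one obtains for free by comparing linear parts in the identity $\varphi^{m}=id$.

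Concretely, I would set
$$\phi\;:=\;\frac{1}{m}\sum_{k=0}^{m-1}\varphi_{1}^{-k}\circ\varphi^{k}\;\in\;K[[x_{1},\ldots,x_{n}]]^{n},$$
where each $\varphi_{1}^{-k}$ is regarded as a linear element of $G_{\infty}(n,K)$. The first thing to verify is that $\phi\in G_{\infty}(n,K)$: the constant term vanishes because each $\varphi^{k}$ has no constant term, and the linear part is
$$\phi_{1}\;=\;\frac{1}{m}\sum_{k=0}^{m-1}\varphi_{1}^{-k}\cdot\varphi_{1}^{k}\;=\;I_{n},$$
so $\det\phi_{1}=1$ and $\phi$ is composition-invertible.

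Next I would establish the intertwining $\phi\circ\varphi=\varphi_{1}\circ\phi$ by reindexing:
$$\phi\circ\varphi\;=\;\frac{1}{m}\sum_{k=0}^{m-1}\varphi_{1}^{-k}\circ\varphi^{k+1}\;=\;\varphi_{1}\circ\frac{1}{m}\sum_{k=0}^{m-1}\varphi_{1}^{-(k+1)}\circ\varphi^{k+1}\;=\;\varphi_{1}\circ\frac{1}{m}\sum_{j=1}^{m}\varphi_{1}^{-j}\circ\varphi^{j}.$$
Using $\varphi^{m}=id$ and $\varphi_{1}^{m}=I_{n}$, the $j=m$ summand equals the $j=0$ summand, so the last sum equals $m\phi$. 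Therefore $\phi\circ\varphi=\varphi_{1}\circ\phi$, and consequently $\varphi=\phi^{-1}\circ\varphi_{1}\circ\phi$, which is the desired conjugation.

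The only real obstacle is bookkeeping: one must confirm that $\phi$ is a well-defined element of $K[[x_{1},\ldots,x_{n}]]^{n}$ (immediate since the defining sum is finite and $m$ is invertible in $K$), that composition distributes through finite sums on the inner slot and through pre-composition by the linear maps $\varphi_{1}^{-k}$, and that the reindexing above is valid degree-by-degree. Once those housekeeping points are settled, the whole argument collapses to a single telescoping identity, and the diagonalizability of $\varphi_{1}$ enters only implicitly as a convenient description of the normal form rather than as a hypothesis needed for the construction.
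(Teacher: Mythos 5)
Your proof is correct, and it takes a genuinely different route from the paper. The paper builds the conjugator $\phi$ degree by degree: assuming $\varphi_{\leq k}=(\phi^{-1}\varphi_1\phi)_{\leq k}$, it must solve the homological equation $(\phi\varphi)_{k+1}+\phi_{k+1}\varphi_1-\varphi_1\phi_{k+1}=0$ for $\phi_{k+1}$, and diagonalizability of $\varphi_1$ is used there in an essential-looking way --- the operator $t(u)=\varphi_1u-u\varphi_1$ is diagonalized on the monomial basis $\{X_{i,\alpha}\}$, and the solvability of the equation is checked by showing that the averaging operator $R(u)=\sum_{i=0}^{p-1}\varphi_1^iu\varphi_1^{p-1-i}$ kills both the image of $t$ and the obstruction $(\phi\varphi)_{k+1}$. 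Your closed-form average $\phi=\frac{1}{m}\sum_{k=0}^{m-1}\varphi_1^{-k}\varphi^k$ bypasses all of this: the verification reduces to right-distributivity of composition (the paper's Lemma 3(ii)), linearity of left-composition by the linear map $\varphi_1$ (Lemma 3(i)), the homomorphism property $(\varphi^k)_1=\varphi_1^k$ giving $\phi_1=id$ and $\varphi_1^m=I_n$, and the invertibility of $m$ in $K$. All of these hold, so the telescoping identity $\phi\varphi=\varphi_1\phi$ is valid, and you correctly observe that diagonalizability is not needed --- your argument proves the stronger statement that every $m$-periodic $\varphi$ with $m\notin\mathrm{char}\,K\cdot\mathbb{Z}$ is conjugate to $\varphi_1$, with no hypothesis on $\varphi_1$ beyond what periodicity forces. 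What the paper's longer route buys is the explicit degree-by-degree obstruction formula $((\varphi_{\leq k})^p)_{k+1}+\sum_i\varphi_1^i\varphi_{k+1}\varphi_1^{p-1-i}=0$, which is reused to derive the recursive constraint $\varphi_{pk+1}=-\frac{\omega}{p}((\varphi_{\leq pk})^p)_{pk+1}$ of Theorem 11; your averaging argument does not directly yield that by-product.
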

Next result says that if the first term of $\varphi$ is a multiple of the identity, then its $m$-periodicity is completely determined by a constraint.
\begin{theorem}\label{intthm032}
Suppose $m\geq 2$ and $m\notin \mathrm{char}\,K\cdot\mathbb{Z}.$ Let $\omega$ be a primitive $m$-th root of unit in $K.$ Let $\varphi=\omega\cdot id+\sum_{i=2}^\infty\varphi_i\in G_\infty(n,K).$ Then $\varphi$ is $m$-periodic if and only if $\varphi_{mk+1}=-\frac{\omega}{m}((\varphi_{\leq mk})^m)_{mk+1}$ for any $k\geq 1,$ where $\varphi_{\leq mk}=\sum_{i=1}^{mk}\varphi_i.$
\end{theorem}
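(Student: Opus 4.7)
The plan is to establish both directions by analyzing, for each $d\ge 2$, the degree $d$ part of the $m$-fold composition $\varphi^m$. The key computational lemma I would prove first is the decomposition
\[
(\varphi^m)_d = c_d\,\varphi_d + \bigl((\varphi_{\leq d-1})^m\bigr)_d,
\qquad c_d=\begin{cases} m\omega^{-1} & \text{if } d\equiv 1\pmod m,\\ 0 & \text{otherwise.}\end{cases}
\]
To derive it, I would write $\varphi_{\leq d}=\varphi_{\leq d-1}+\varphi_d$ and expand $(\varphi_{\leq d})^m$ into the $2^m$ words of length $m$ in the letters $\varphi_{\leq d-1}$ and $\varphi_d$. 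Any word containing two or more $\varphi_d$'s lies in degree $\ge 2d>d$, so only words with $\varphi_d$ appearing at most once matter. When $\varphi_d$ occupies position $j$, the $m-j$ factors on its right enter only through their linear part $\omega\cdot id$ and the $j-1$ factors on its left likewise, producing the contribution $\omega^{(m-j)d+j-1}\varphi_d$. A geometric-sum computation in $\omega^{1-d}$ over $j=1,\dots,m$ collapses to $m\omega^{-1}$ when $\omega^{d-1}=1$ and to $0$ otherwise.

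The forward direction is then immediate: if $\varphi^m=id$, the decomposition at $d=mk+1$ gives $0=m\omega^{-1}\varphi_{mk+1}+((\varphi_{\leq mk})^m)_{mk+1}$, and dividing by $m\omega^{-1}$ (legal since $m$ is invertible in $K$ by the hypothesis $m\notin\mathrm{char}\,K\cdot\mathbb{Z}$) yields the asserted recursion.

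For the backward direction I would proceed by strong induction on $d\ge 2$, showing $(\varphi^m)_d=0$ under the hypothesis that $(\varphi^m)_{d'}=0$ for $2\le d'<d$. When $d=mk+1$, substituting the recursion into the decomposition gives $(\varphi^m)_d=0$ at once. The delicate case is $d\not\equiv 1\pmod m$, where the recursion places no constraint on $\varphi_d$ and the decomposition collapses to $(\varphi^m)_d=((\varphi_{\leq d-1})^m)_d$, which is not obviously zero. Here I would set $g:=\varphi^m-id$, noting that $g$ has smallest nonzero degree $\ge d$ by the induction hypothesis, and exploit the tautology $\varphi\circ\varphi^m=\varphi^m\circ\varphi$, i.e. $\varphi(id+g)=(id+g)(\varphi)$. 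Expanding each side at degree $d$, the left gives $\omega g_d+\varphi_d$ and the right gives $\varphi_d+\omega^d g_d$, the same degree bookkeeping as in the main lemma killing off all mixed substitutions. Equating forces $(\omega-\omega^d)g_d=0$, and since $\omega$ is a primitive $m$-th root of unity in $K$ we have $\omega^{d-1}\ne 1$ whenever $d\not\equiv 1\pmod m$, forcing $g_d=0$.

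The main obstacle is precisely this ``free'' case $d\not\equiv 1\pmod m$ in the backward direction: the recursion imposes no condition on $\varphi_d$, and the natural attempt of extending $\varphi_{\leq d-1}$ to an $m$-periodic series and then invoking the forward direction is circular. The push--pull identity $\varphi\circ\varphi^m=\varphi^m\circ\varphi$ avoids this trap by converting the question into the nonvanishing of $\omega^{d-1}-1$, which is exactly what the hypothesis on the characteristic guarantees.
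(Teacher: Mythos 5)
Your proposal is correct, and its first half coincides with the paper's: your decomposition $(\varphi^m)_d = c_d\varphi_d + ((\varphi_{\leq d-1})^m)_d$ is exactly the paper's identity (\ref{relationpps33}) specialized to $\varphi_1=\omega\cdot id$, which yields the same dichotomy (\ref{iffppcs}) between degrees $d\equiv 1\pmod m$ (where the recursion determines $\varphi_d$) and the remaining degrees (where one must prove $((\varphi_{\leq d-1})^m)_d=0$ for free). One caveat on your derivation: composition is linear only in the left factor, so the expansion into ``$2^m$ words'' is not literally valid; to make it rigorous you need the one-step truncation identity $(\varphi(\phi+\phi_d))_{\leq d}=(\varphi\phi)_{\leq d}+\varphi_1\phi_d$ and an induction on the number of factors, which is precisely how the paper obtains (\ref{relationpps33}) via Lemma \ref{zhankai01} and Lemma \ref{lemfaneq53}. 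Where you genuinely diverge is the delicate case $d\not\equiv 1\pmod m$: the paper's Lemma \ref{lemimpp09} introduces the inverse $\psi$ of $\varphi^{m-1}$, observes $\psi_{\leq d}=\varphi_{\leq d-1}+\psi_d$, and plays the two equations $(\varphi^{m-1}\psi)_d=0$ and $(\psi\varphi^{m-1})_d=0$ against each other to get $(\omega^{m-1}-\omega^{-s})\psi_d=0$; you instead set $g=\varphi^m-id$ and compare the degree-$d$ parts of $\varphi\circ\varphi^m$ and $\varphi^m\circ\varphi$, obtaining $(\omega-\omega^d)g_d=0$ directly. Both arguments turn on the same nonvanishing $\omega^{d-1}\neq 1$, but yours avoids introducing the auxiliary inverse and the bookkeeping of its truncations, so it is a modest simplification of the paper's Lemma \ref{lemimpp09}; the paper's version has the minor side benefit of also producing $\psi_d=0$, i.e.\ information about the inverse of $\varphi^{m-1}$ along the way.
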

\begin{remark}
By reducing to $SL_n(\mathbb{Z}),$ it is not difficult to see that $G_\infty(n,\mathbb{Z})$ has no $m$-periodic series for $m> 2.$ As an interesting consequence of Theorem \ref{intthm032}, there are $2$-periodic series in $G_\infty(n,\mathbb{Z})$ since the coefficients in $((\varphi_{\leq 2k})^2)_{2k+1}$ are all even when the constraint holds.
\end{remark}
 We get an immediate consequence of Theorem \ref{intthm032}.
\begin{corollary}
	If $\varphi\in G_\infty(n,\mathbb{C})$ is periodic, then $\varphi$ is conjugate to $\varphi_1.$
\end{corollary}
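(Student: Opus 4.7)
The plan is to deduce the corollary directly from the two preceding theorems, with the only nontrivial input being a standard fact from linear algebra to verify the diagonalizability hypothesis. Since $\varphi \in G_\infty(n,\mathbb{C})$ is periodic and $\mathrm{char}\,\mathbb{C}=0$, Theorem \ref{intro01} gives $\mathrm{ord}(\varphi_1) = \mathrm{ord}(\varphi) =: m < \infty$, so $\varphi_1 \in GL_n(\mathbb{C})$ is a finite-order matrix.

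Next I would invoke the fact that any matrix $A \in GL_n(\mathbb{C})$ of finite order $m$ is diagonalizable: its minimal polynomial divides $x^m - 1$, which has distinct roots in $\mathbb{C}$, hence the minimal polynomial is squarefree and $A$ is diagonalizable. Applied to $\varphi_1$, this supplies exactly the hypothesis needed for the second theorem.

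Finally I would split on $m$. If $m = 1$, then $\varphi = id = \varphi_1$ and the conjugacy is trivial (take $\phi = id$). If $m \geq 2$, then the condition $m \notin \mathrm{char}\,\mathbb{C}\cdot \mathbb{Z} = \{0\}$ holds automatically, so the second theorem applies and yields $\phi \in G_\infty(n,\mathbb{C})$ with $\varphi = \phi^{-1}\varphi_1\phi$.

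There is no real obstacle here, since the work has already been done by the earlier theorems; the corollary is essentially bookkeeping. The only point worth highlighting is that the bridge between Theorem \ref{intro01} (which guarantees the order of $\varphi_1$ is finite) and the second theorem (which requires $\varphi_1$ diagonalizable) is precisely the algebraic closedness of $\mathbb{C}$, via the splitting of $x^m-1$ into distinct linear factors. Over a general field of characteristic zero this bridge can fail, which is why the corollary is stated specifically over $\mathbb{C}$.
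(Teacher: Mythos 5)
Your proposal is correct and matches the paper's argument: the paper likewise deduces the corollary immediately from Theorem \ref{theomcogaking} together with the fact that every periodic matrix in $GL_n(\mathbb{C})$ is diagonalizable. Your extra bookkeeping (the minimal-polynomial justification and the trivial case $m=1$) only makes explicit what the paper leaves implicit.
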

This note is arranged as follows. In section \ref{firsectintr67} we review background material from \cite{Dorfer1999,ArnoEssen2000,Farrell2008,Thomas2014}, including formal transformation group and its basic properties. In section \ref{prfmainthm53}, we give a recursive description (Theorem \ref{inverseformula66}) of the Lagrange inversion formula and some lemmas. In section \ref{secsubgroups3}, we introduce a kind of subgroup $G_{p,q}(n,K)$ of $G_\infty(n,K)$ depending on a pair of integers $p$ and $q.$ In section \ref{periptssect6}, we classify periodic series with an initial condition and prove our main results.

\section{Notation and conventions}\label{firsectintr67}
We begin by recalling the basic properties of formal transformation groups. Let $K$ be a field and let $K\{x_1,\cdots,x_n\}$ (or simply denoted by $K[[X]]$) be the formal power series ring in $n$ variables $x_1,\cdots,x_n$ over $K$ whose element has the form
$$\sum_{\alpha} a_\alpha x_1^{\alpha_1}\cdots x_n^{\alpha_n},$$
where $a_\alpha\in K$ and the index $\alpha=(\alpha_1,\cdots,\alpha_n)$ runs over $\mathbb{N}^n.$ For each $k\in\mathbb{N},$ we consider a set $K[X]_k^n$ that consists of all  polynomial vectors $(f_1,\cdots,f_n)\in K[X]^n$ where each $f_i$ is a homogeneous polynomial of degree $k.$ The first space $K[X]_0^n$ is just $K^n.$ The second space $K[X]_1^n$ is isomorphic to the matrix space $M_{n\times n}(K)$ over $K$ by the jacobian map
\begin{eqnarray}\label{matrxkxn198}
\left(\sum_{j=1}^na_{1j}x_j,\cdots,\sum_{j=1}^na_{nj}x_j\right)\mapsto (a_{ij}).
\end{eqnarray}
With above notations, every series $\varphi\in K[[X]]^n$ has a unique \emph{homogeneous decomposition} 
$$\varphi=\sum_{k=0}^\infty\varphi_k$$
with $\varphi_k\in K[X]_k^n.$ For convenience, we write $\varphi_{\leq m}$ for $\sum_{i=0}^m\varphi_i.$ Also, we use the notation $K[[X]]_{\leq a}^n$ for $\bigoplus_{i=0}^a K[X]_i^n.$ We always use Greek letters to represent series in $K[[X]]^n$ and use $f$ or $g$ to represent the components of series. Unless otherwise specified, these notations will be used throughout this paper. 

Note that for two series $\varphi=(f_1,\cdots,f_n),\phi\in K[[X]]^n,$ the series composition 
$$\varphi\circ\phi=(f_1(\phi),\cdots,f_n(\phi))$$
 is a product of $K[[X]]^n.$ It is easy to see that the set $G_\infty(n,K)$ defined in the introduction is multiplicatively closed. The classical result (see, for example \cite[Theorem 1.1.2]{ArnoEssen2000}) about formal series shows that every element in $G_\infty(n,K)$ has a unique inverse, that is, $G_\infty(n,K)$ is an infinite dimensional group. From now on, we always write $\varphi\phi$ for the composition $\varphi\circ\phi.$ Since $\varphi_1$ is just the jacobian matrix of $\varphi$ by (\ref{matrxkxn198}), the map $(\cdot)_1:G_\infty(n,K)\rightarrow K[X]_1^n$ is indeed a group homomorphism, i.e. the equality $(\varphi\phi)_1=\varphi_1\phi_1$ holds for all $\varphi,\phi\in G_\infty(n,K).$
 
Next we explain that $G_\infty(n,K)$ is a limit of a sequence of finite-dimensional Lie groups. For each positive integer $k,$ let 
 $$S_k:=\{\varphi\in K[[X]]^n|\ \varphi_i=0\ \mbox{for every }0\leq i\leq k\}.$$
  For convenience, we also write $S_\infty=0$ to match $G_\infty(n,K)=G_\infty(n,K)\ \mathrm{mod}\ S_\infty.$ We say $\varphi\equiv \phi\ \mathrm{mod}\ S_k$ if $\varphi_j=\phi_j$ for all $j\leq k.$  
 \begin{definition}
 For each $k\geq 1,$ we define the $k$-th transformation group over $K$ of order $n$ to be
 $$G_k(n,K):=G_\infty(n,K)\ \mathrm{mod}\ S_k.$$
 \end{definition}
 The product of $\varphi=(f_1,\cdots,f_n),\phi=(g_1,\cdots,g_n)\in G_k(n,K)$ is given by
$$\varphi\phi=(f_1(g_1,\cdots,g_n),\cdots,f_n(g_1,\cdots,g_n))\ \mathrm{mod}\ S_{k}$$
that is well-defined. The identity element $id$ is $(x_1,\cdots,x_n).$ 

When $k<\infty,$ a direct calculation shows that the dimension of $G_k(n,K)$ over $K$ is 
$$\mathrm{dim}\,G_k(n,K)=n\sum_{i=1}^k\binom{n+i-1}{i}.$$
If $k=1,$ the initial transformation group $G_1(n,K)$ is just the general linear group $GL_n(K).$ So it is a generalization of the general linear group. There is a natural homomorphism between any two transformation groups
 $$\pi_{k,m}:G_k(n,K)\rightarrow G_m(n,K),\ \varphi\mapsto\varphi\ \mathrm{mod}\ S_{m}$$
for $k\geq m.$ In particular, $\pi_{k,1}(\varphi)=\varphi_1.$ If $\varphi$ is a polynomial vector with $\varphi(0)=0,$ then $\varphi_1$ can be viewed as the tangent map of $\varphi:K^n\rightarrow K^n$ at $0.$ 
\begin{remark}
There are other groups of special formal power series, for example, the group of formal symmetric series \cite{Bondt2004,VEssen2021} and the group of formal Laurent series \cite{Ganxx2011}.
\end{remark}
The infinite dimensional group $G_\infty(n,K)$ is actually the inverse limit of $G_k(n,K),$ that is to say,  $G_\infty(n,K)$ is isomorphic to 
$$\lim_{\longleftarrow}G_k(n,K)=\{(A_1,A_2,\cdots)|A_j\in G_j(n,K),\pi_{i,j}(A_i)=A_j,\forall i>j\}.$$
\begin{remark}
We attach a positive integer $T(\varphi)$ to each $\varphi\in G_\infty(n,K)$ with $\varphi\ne \varphi_1$ by 
$$T(\varphi)=\sup\left\{m\in\mathbb{N}|\ \varphi=\sum_{k=0}^\infty \varphi_{mk+1}\right\}.$$
If $\varphi= \varphi_1,$ define $T(\varphi)=0.$ For instance, {in $n=1$ dimension,} $T(x+x^2)=1$ and $T(x+x^3+x^5)=2.$ By the recurrence relations (\ref{recurelatin666}), one can see that $T(\varphi)=T(\varphi^{-1}).$
\end{remark}
\begin{example}
The inverse of a quadratic function $\varphi=x+ax^2\in G_\infty(1,K)$ with $a\ne 0$ is given by $$\varphi^{-1}=\frac{\sqrt{4 a x+1}-1}{2 a}=x-a x^2+2 a^2 x^3-5 a^3 x^4+14 a^4 x^5-42 a^5 x^6+\cdots$$ 
\end{example}
At the end of this section we describe the product of $G_\infty(n,K)$ in another way. Now assume $\mathrm{char}\,K=0.$ For any $k\geq 2,$ consider the linear space $\bigoplus_{i=2}^k K[X]_i^n$
over $K$ with a set of monomials $e_1,\cdots,e_s$ as its basis. Then the product of $G_k(n,K)$ gives an integer polynomial vector $P_k=(P_{k,1},\cdots,P_{k,s})\in\mathbb{Z}[x_1,\cdots,x_s,y_1,\cdots,y_s]^s$ by
$$id+\sum_{i=1}^sP_{k,i}(x_1,\cdots,x_s,y_1,\cdots,y_s)e_i\equiv\left(id+\sum_{i=1}^s x_ie_i\right)\left(id+\sum_{i=1}^s y_ie_i\right)\ \mathrm{mod}\ S_k$$
with respect to the basis $e_1,\cdots,e_s,$ which is independent of the field $K.$ The existence of {inverses in $G_k(n,K)$} implies that there exists an integer polynomial vector $I_k=(I_{k,1},\cdots,I_{k,s})\in\mathbb{Z}[x_1,\cdots,x_s]^s$ defined by the inverse equation
$$id+\sum_{i=1}^sI_{k,i}(x_1,\cdots,x_s)e_i\equiv\left(id+\sum_{i=1}^s x_ie_i\right)^{-1}\ \mathrm{mod}\ S_k$$
satisfying 
$$P_k(x_1,\cdots,x_s,I_k(x_1,\cdots,x_s))=P_k(I_k(x_1,\cdots,x_s),x_1,\cdots,x_s)=0.$$ 
{ In fact, the collection of all polynomial vectors $\{P_k\}_{k=2}^\infty$ determines $\{I_k\}_{k=2}^\infty,$ see Theorem \ref{inverseformula66} in section \ref{prfmainthm53}. Hence $I_k$ is also independent of $K.$} These polynomial vectors $P_k$ and $I_k$ are related to the incidence algebra \cite{Doubilet1972,Haiman1989}.

\section{{A recursive formula for inversion in $G_\infty(n,K)$}}\label{prfmainthm53}
In this section we give a recursive description of the inverse of an invertible series, which is simpler than Lagrange inversion formula \cite{Gessel1987,Johnston2022}.
\begin{theorem}\label{inverseformula66}
For every $\varphi\in G_\infty(n,K),$ the inverse $\phi=\sum_{i=1}^\infty\phi_i$ of $\varphi$ is given by the following recurrence relations, 
\begin{eqnarray}\label{recurelatin666}
	\phi_1=\varphi_1^{-1},\ \ \phi_{k+1}=-\varphi_1^{-1}\left(\varphi\left(\sum_{i=1}^k\phi_i\right)\right)_{k+1}.
\end{eqnarray}
\end{theorem}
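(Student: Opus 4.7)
The plan is to derive the recurrence directly from the defining equation $\varphi\circ\phi=id,$ extracting homogeneous pieces of each side and isolating $\phi_{k+1}.$ The whole argument is an induction on $k,$ built on one structural observation about how composition interacts with the homogeneous decomposition.

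First I would expand $\varphi\circ\phi=\sum_{j\geq 1}\varphi_j\circ\phi$ and stare at the degree-$(k+1)$ component of each term. For $j=1,$ under the identification (\ref{matrxkxn198}) we have $\varphi_1\circ\phi=\varphi_1\cdot\phi,$ so $(\varphi_1\circ\phi)_{k+1}=\varphi_1\phi_{k+1}.$ For $j\geq 2,$ the key observation is that $(\varphi_j\circ\phi)_{k+1}$ is a sum over choices of $j$ indices $i_1,\dots,i_j\geq 1$ with $i_1+\cdots+i_j=k+1;$ because $j\geq 2,$ each $i_\ell\leq k+1-(j-1)\leq k.$ Hence only $\phi_1,\dots,\phi_k$ enter, and
$$(\varphi_j\circ\phi)_{k+1}=(\varphi_j\circ\phi_{\leq k})_{k+1}\quad\text{for all }j\geq 2.$$

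Next I would assemble these pieces. Since $\phi_{\leq k}$ contains no degree-$(k+1)$ part, $(\varphi_1\circ\phi_{\leq k})_{k+1}=(\varphi_1\phi_{\leq k})_{k+1}=0,$ so adding this vanishing term gives
$$(\varphi\circ\phi)_{k+1}=\varphi_1\phi_{k+1}+\sum_{j\geq 2}(\varphi_j\circ\phi_{\leq k})_{k+1}=\varphi_1\phi_{k+1}+(\varphi\circ\phi_{\leq k})_{k+1}.$$
Setting $(\varphi\circ\phi)_{k+1}=(id)_{k+1}=0$ for $k\geq 1$ and applying $\varphi_1^{-1}$ (which exists since $\varphi_1\in GL_n(K)$) yields the stated recurrence. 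The base case is $(\varphi\circ\phi)_1=\varphi_1\phi_1=id,$ giving $\phi_1=\varphi_1^{-1}.$ Induction then produces a well-defined sequence $\{\phi_k\}_{k\geq 1},$ and by construction the resulting $\phi=\sum_{k\geq 1}\phi_k$ satisfies $\varphi\circ\phi=id;$ uniqueness of the inverse in $G_\infty(n,K)$ (cited from \cite[Theorem 1.1.2]{ArnoEssen2000}) identifies this $\phi$ with the inverse of $\varphi.$

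The only genuine point to get right, rather than a technical obstacle, is the combinatorial separation above: that $\phi_{k+1}$ appears in the degree-$(k+1)$ expansion of $\varphi\circ\phi$ exclusively through the linear term $\varphi_1\phi_{k+1},$ while every higher-order term $\varphi_j\circ\phi$ with $j\geq 2$ depends only on $\phi_{\leq k}.$ Once this is stated cleanly the proof is a one-line consequence, so the write-up should foreground that observation and treat the surrounding manipulations as routine bookkeeping.
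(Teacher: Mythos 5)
Your proof is correct and follows essentially the same route as the paper: the paper's argument also isolates $\phi_{k+1}$ by noting that $(\varphi_j\circ\phi)_{k+1}$ for $j\geq 2$ depends only on $\phi_{\leq k}$ (this is the content of its formula $r(k,j)=\max(j-k+1,0)$ and Lemma~\ref{twocompdeg6}), so that $(\varphi(\phi_{\leq k}+\phi_{k+1}))_{\leq k+1}=\varphi_1\phi_{k+1}+(\varphi\phi_{\leq k})_{\leq k+1}$, and then solves the degree-$(k+1)$ equation. The combinatorial separation you foreground is exactly the key step of the paper's proof, and your appeal to uniqueness of the inverse matches its implicit use of the classical existence result.
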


{We next give some lemmas which will be used in the proofs of Theorem \ref{inverseformula66} and other results of the present paper. A similar method has been used in \cite{Thomas2014}. 
\begin{lemma}\label{lema2ptysum32}
Let $\varphi,\phi,\psi\in K[[X]]^n.$ We have
\begin{itemize}
\item[(\romannumeral1)] $\varphi_1(\phi+\psi)=\varphi_1\phi+\varphi_1\psi.$ 
\item[(\romannumeral2)] $(\phi+\psi)\varphi=\phi\varphi+\psi\varphi.$
\end{itemize}
\end{lemma}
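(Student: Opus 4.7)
The lemma asserts two distributivity properties of the composition product, and both are essentially formal consequences of unraveling the definitions. The plan is to treat each part by going to components and exploiting the ring structure of $K[[X]]$.

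For part (\romannumeral1), the key observation is that $\varphi_1 \in K[X]_1^n$ corresponds under the Jacobian identification (\ref{matrxkxn198}) to a matrix $A = (a_{ij}) \in M_{n\times n}(K)$. Hence for any $\eta = (h_1,\dots,h_n) \in K[[X]]^n$, the composition $\varphi_1 \eta$ is simply the matrix-vector product whose $i$-th component equals $\sum_{j=1}^n a_{ij} h_j$. Writing $\phi = (f_1,\dots,f_n)$ and $\psi = (g_1,\dots,g_n)$, the $i$-th component of $\varphi_1(\phi+\psi)$ is $\sum_j a_{ij}(f_j + g_j)$, and distributivity of multiplication over addition in $K[[X]]$ immediately splits this into $\sum_j a_{ij} f_j + \sum_j a_{ij} g_j$, giving $\varphi_1\phi + \varphi_1\psi$.

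For part (\romannumeral2), the argument is equally direct but relies on a different fact: substitution of a fixed tuple $\varphi$ into a formal power series is additive in the outer argument. Concretely, writing $\phi = (f_1,\dots,f_n)$ and $\psi = (g_1,\dots,g_n)$, the $i$-th component of $(\phi+\psi)\varphi$ is $(f_i + g_i)(\varphi)$. Since the map $K[[X]] \to K[[X]]$ sending $h \mapsto h(\varphi)$ is a ring homomorphism wherever it is defined, this equals $f_i(\varphi) + g_i(\varphi)$, which is the $i$-th component of $\phi\varphi + \psi\varphi$. Assembling components yields the claimed identity.

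The only minor subtlety is ensuring the compositions appearing on each side make sense as elements of $K[[X]]^n$; for instance part (\romannumeral2) requires that $\varphi$ has no constant term so that $f_i(\varphi)$ and $g_i(\varphi)$ are well-defined formal power series, but this is the standing convention for composition in $K[[X]]^n$. No real obstacle is expected: the lemma is a routine unpacking of definitions whose purpose is to isolate, for repeated later use, the two linearity rules (left-linearity of $\varphi_1$ acting by composition, and right-linearity of any composition in its outer argument) that will streamline the recursive manipulations in the proof of Theorem \ref{inverseformula66}.
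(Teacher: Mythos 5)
Your proof is correct and matches the paper's approach: the paper dismisses this lemma with ``a direct calculation,'' and your componentwise unpacking (matrix--vector distributivity for part (i), additivity of substitution for part (ii)) is exactly that calculation written out in full.
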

\begin{proof}
This lemma immediately follows by a direct calculation.
\end{proof}

 Lemma \ref{lema2ptysum32} tells us that the right-multiplication of $K[[X]]^n$ and the left-multiplication by $\varphi_1\in K[X]_1^n$ are linear. We then turn our attention to the left-multiplication of $K[[X]]^n.$ 

Fix a nonzero degree $k$ homogenous series $\varphi.$ For each $j\in\mathbb{N}_{\geq 1},$ the map
$$K[[X]]_{\geq 1}^n\rightarrow K[X]_j^n,\ \ \phi\mapsto(\varphi\phi)_j$$
 can be viewed as a map of infinite variables
$$\Phi_j:\prod_{i=1}^\infty K[X]_i^n\rightarrow K[X]_j^n,\ \ \Phi_j(\phi_1,\phi_2,\cdots)=(\varphi(\phi_1+\phi_2+\cdots))_j.$$
Obviously, each $\Phi_j$ has only finite nontrivial variables, that is, there exists an integer $r\in\mathbb{N}$ so that $$\Phi_j(\phi_1,\phi_2,\cdots)=\Phi_j(\phi_1,\cdots,\phi_r,0,0,\cdots)$$
 for any $(\phi_1,\phi_2,\cdots)\in\prod_{i=1}^\infty K[X]_i^n.$ {In other words, $(\varphi\phi)_j=(\varphi\phi_{\leq r})_j$ holds for any $\phi\in K[[X]]_{\geq 1}^n.$} Such the smallest integer $r$ depends only on $j$ and $k.$ We shall denote the smallest integer $r$ by $r(k,j).$ It is not hard to see that 
\begin{eqnarray}\label{smestskj31}
 r(k,j)=\mathrm{max}(j-k+1,0).
\end{eqnarray}
\begin{lemma}\label{twocompdeg6}
Let $\varphi,\phi\in G_\infty(n,K)$ be two series with homogeneous decompositions $\varphi=\sum_{i=1}^\infty\varphi_i$ and $\phi=\sum_{i=1}^\infty\phi_i,$ respectively. Then 
\begin{eqnarray*}
(\varphi\phi)_{\leq m}= \sum_{i=1}^m(\varphi_i\phi_{\leq m+1-i})_{\leq m}.
\end{eqnarray*}
\end{lemma}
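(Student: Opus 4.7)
The plan is to reduce the identity to two elementary facts: the distributivity of right composition from Lemma \ref{lema2ptysum32}(\romannumeral2), and the bound (\ref{smestskj31}) on how many homogeneous components of the inner series are actually felt by a degree-$j$ output.

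First I would write $\varphi=\sum_{i\geq 1}\varphi_i$ and apply Lemma \ref{lema2ptysum32}(\romannumeral2) (extended from finite sums to the infinite sum in the standard way: for any fixed output degree $j$, only finitely many $\varphi_i$ contribute, so no convergence issue arises). This gives
\[
(\varphi\phi)_{\leq m}=\sum_{i=1}^{\infty}(\varphi_i\phi)_{\leq m}.
\]
Next, since $\phi\in K[[X]]_{\geq 1}^n$, the series $\varphi_i\phi$ has its lowest-degree term in $K[X]_i^n$, so $(\varphi_i\phi)_{\leq m}=0$ whenever $i>m$. Therefore the sum truncates at $i=m$:
\[
(\varphi\phi)_{\leq m}=\sum_{i=1}^{m}(\varphi_i\phi)_{\leq m}.
\]

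The second step is to replace the inner series $\phi$ by its truncation $\phi_{\leq m+1-i}$. For fixed $i\in\{1,\dots,m\}$ and fixed $j\leq m$, equation (\ref{smestskj31}) applied with $k=i$ gives $r(i,j)=\max(j-i+1,0)\leq m+1-i$. By definition of $r(i,j)$, this means $(\varphi_i\phi)_j=(\varphi_i\phi_{\leq r(i,j)})_j=(\varphi_i\phi_{\leq m+1-i})_j$, since truncating further up only discards components indexed beyond $r(i,j)$ and hence invisible in degree $j$. Summing over $j=1,\dots,m$ yields
\[
(\varphi_i\phi)_{\leq m}=(\varphi_i\phi_{\leq m+1-i})_{\leq m},
\]
and substituting this into the previous display completes the proof.

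I do not expect a serious obstacle; the only subtle point is justifying the term-by-term distributivity for an infinite sum of $\varphi_i$, which is immediate once one fixes a target degree and uses Lemma \ref{lema2ptysum32}(\romannumeral2) for the finite nontrivial part. Everything else is bookkeeping with the formula $r(k,j)=\max(j-k+1,0)$.
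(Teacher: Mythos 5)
Your proof is correct and follows exactly the route the paper intends: the paper's own proof is the one-line remark that the lemma ``follows immediately from property (\romannumeral2) in Lemma \ref{lema2ptysum32} and (\ref{smestskj31}),'' and you have simply written out those two steps (right-distributivity to split $\varphi$ into homogeneous pieces and truncate at $i=m$, then the bound $r(i,j)=\max(j-i+1,0)\leq m+1-i$ to replace $\phi$ by $\phi_{\leq m+1-i}$) in full detail. No gaps; your version is just a more explicit rendering of the same argument.
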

\begin{proof}
This lemma follows immediately from the property (\romannumeral2) in Lemma \ref{lema2ptysum32} and (\ref{smestskj31}).
\end{proof}
\begin{lemma}\label{zhankai01}
	For any $\varphi,\phi\in K[[X]]^n,$ we have
	$$(\varphi\phi)_{\leq k+1} = (\varphi_{\leq k}\phi_{\leq k})_{\leq k+1}+\varphi_1\phi_{k+1}+\varphi_{k+1}\phi_1.$$
\end{lemma}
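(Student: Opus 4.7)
The plan is to expand both sides using Lemma \ref{twocompdeg6} with $m=k+1$ and then match terms using the linearity of left-multiplication by $\varphi_1$ from Lemma \ref{lema2ptysum32}(i). The key observation is that on the left-hand side the sum runs over $1\leq i\leq k+1$, while the analogous expansion of $(\varphi_{\leq k}\phi_{\leq k})_{\leq k+1}$ effectively runs only over $1\leq i\leq k$; the two ``boundary'' discrepancies at the ends of the index range produce exactly the extra $\varphi_1\phi_{k+1}$ and $\varphi_{k+1}\phi_1$ terms.

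Concretely, I would first write
\[
(\varphi\phi)_{\leq k+1}=\sum_{i=1}^{k+1}(\varphi_i\phi_{\leq k+2-i})_{\leq k+1}
\]
via Lemma \ref{twocompdeg6}. The $i=k+1$ summand is $(\varphi_{k+1}\phi_1)_{\leq k+1}$, which collapses to $\varphi_{k+1}\phi_1$ because that expression is already homogeneous of degree exactly $k+1$. The $i=1$ summand is $(\varphi_1\phi_{\leq k+1})_{\leq k+1}$, and Lemma \ref{lema2ptysum32}(i) splits it as $\varphi_1\phi_{\leq k}+\varphi_1\phi_{k+1}$. Next I would apply Lemma \ref{twocompdeg6} to the pair $\varphi_{\leq k},\phi_{\leq k}$: since $(\varphi_{\leq k})_{k+1}=0$ and $(\phi_{\leq k})_{\leq k+2-i}=\phi_{\leq k+2-i}$ whenever $2\leq i\leq k$, this gives
\[
(\varphi_{\leq k}\phi_{\leq k})_{\leq k+1}=\varphi_1\phi_{\leq k}+\sum_{i=2}^{k}(\varphi_i\phi_{\leq k+2-i})_{\leq k+1}.
\]
The sums over $2\leq i\leq k$ agree identically on the two sides, so subtracting leaves exactly $\varphi_1\phi_{k+1}+\varphi_{k+1}\phi_1$, which is the claim.

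The hard part is essentially nonexistent — the lemma is a bookkeeping exercise tracking which summands of the Lemma \ref{twocompdeg6} expansion survive truncation at degree $k+1$ and which are killed by further truncating both factors to degree $\leq k$. The only pitfall is remembering that composition is right-distributive but not left-distributive in general, so splits on the right factor must be performed via the degree bound (\ref{smestskj31}) rather than by na\"{\i}vely distributing $\phi_{\leq k}+\phi_{k+1}+\cdots$ inside $\varphi_{\leq k}$.
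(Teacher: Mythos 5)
Your proof is correct and follows essentially the same route as the paper's: both arguments are pure degree bookkeeping via Lemma \ref{twocompdeg6} and the distributivity properties of Lemma \ref{lema2ptysum32}, isolating the two boundary terms $\varphi_1\phi_{k+1}$ and $\varphi_{k+1}\phi_1$. The paper is merely terser, first splitting off $(\varphi_{k+1}\phi)_{\leq k+1}=\varphi_{k+1}\phi_1$ by right-distributivity and then noting that truncating the right factor to $\phi_{\leq k}$ costs exactly $\varphi_1\phi_{k+1}$.
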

\begin{proof}
By Lemma \ref{lema2ptysum32} and Lemma \ref{twocompdeg6}, the assersion follows from
\begin{equation*}
\begin{split}
(\varphi\phi)_{\leq k+1}=&\ (\varphi_{\leq k}\phi)_{\leq k+1}+(\varphi_{k+1}\phi)_{\leq k+1}\\
=&\  (\varphi_{\leq k}\phi_{\leq k})_{\leq k+1}+\varphi_1\phi_{k+1}+\varphi_{k+1}\phi_1,
\end{split}
\end{equation*}	
where the last equality holds due to the fact $(\varphi_1\phi_{\leq k})_{k+1}=0.$
\end{proof}

Now we are ready for the proof of Theorem \ref{inverseformula66}.

\emph{Proof of Theorem \ref{inverseformula66}.} Now given a series $\varphi\in G_\infty(n,K).$ Let us first calculate the necessary condition for $\varphi\phi=id$ in $G_\infty(n,K).$ By $(\varphi\phi)_1=\varphi_1\phi_1=id,$ we see that $\phi_1=\varphi_1^{-1},$ since $G_1(n,K)\cong GL_n(K)$ is a group. Suppose we are given $\phi_1,\phi_2,\cdots,\phi_k$ with $\phi_i\in K[X]_i^n$ such that $\phi=\sum_{i=1}^k\phi_i$ satisfies 
$$(\varphi\phi)_{\leq k}=(\phi\varphi)_{\leq k}= id.$$
 Now we solve the equation 
 $$(\varphi(\phi+\phi_{k+1}))_{\leq k+1}= ((\phi+\phi_{k+1})\varphi)_{\leq k+1}= id$$
  about $\phi_{k+1}\in K[X]_{k+1}^n.$ By the property (\romannumeral1) in Lemma \ref{lema2ptysum32}, {left multiplication by $\varphi_1$ is linear, and hence it follows from Lemma \ref{twocompdeg6} that} 
\begin{equation*}
	\begin{split}
	(\varphi(\phi+\phi_{k+1}))_{\leq k+1}=&\ (\varphi_1(\phi+\phi_{k+1}))_{\leq k+1}+\sum_{i=2}^{k+1}(\varphi_i\phi)_{\leq k+1}\\
	= &\ \varphi_1\phi_{k+1}+(\varphi\phi)_{\leq k+1} .
	\end{split}
\end{equation*}
As $(\varphi(\phi+\phi_{k+1}))_{k+1}=0,$ we obtain $\phi_{k+1}=-\varphi_1^{-1}\cdot(\varphi\phi)_{k+1}.$ This finishes the proof.}

Next, we give an interesting corollary of Theorem \ref{inverseformula66}:
\begin{corollary}\label{imtsidlem1}
Let $\varphi,\phi\in G_\infty(n,K)$ be two series with $\varphi_1=\phi_1=id,$ satisfying 
$$(\varphi\phi)_{\leq k}=(\phi\varphi)_{\leq k}= id$$
for some $k\in\mathbb{N}_{\geq 1}.$ Then $(\varphi\phi)_{k+1}=(\phi\varphi)_{k+1}.$
\end{corollary}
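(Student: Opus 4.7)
The plan is to first use Lemma \ref{zhankai01} to reduce the claim to a statement about the truncations $\varphi_{\leq k}$ and $\phi_{\leq k}$, and then extract the equality by evaluating a single triple product in two distinct ways via associativity.

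\emph{Reduction step.} By Lemma \ref{zhankai01} and $\varphi_1 = \phi_1 = id$,
$$(\varphi\phi)_{k+1} = (\varphi_{\leq k}\phi_{\leq k})_{k+1} + \varphi_{k+1} + \phi_{k+1},$$
and the analogous identity holds for $(\phi\varphi)_{k+1}$, so it suffices to prove $(\varphi_{\leq k}\phi_{\leq k})_{k+1} = (\phi_{\leq k}\varphi_{\leq k})_{k+1}$. Furthermore, Lemma \ref{twocompdeg6} implies that $(\varphi\phi)_{\leq k}$ depends only on $\varphi_{\leq k}$ and $\phi_{\leq k}$, so the hypothesis yields $\varphi_{\leq k}\phi_{\leq k} \equiv id \equiv \phi_{\leq k}\varphi_{\leq k} \mod S_k$. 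Write $\varphi_{\leq k}\phi_{\leq k} = id + \alpha$ and $\phi_{\leq k}\varphi_{\leq k} = id + \beta$ with $\alpha, \beta \in S_k$; the goal becomes $\alpha_{k+1} = \beta_{k+1}$.

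\emph{Triple product in two ways.} The core step is to compute $(\varphi_{\leq k}\phi_{\leq k}\varphi_{\leq k})_{k+1}$ by associativity. For the first grouping, $((\varphi_{\leq k}\phi_{\leq k})\varphi_{\leq k})_{k+1} = ((id+\alpha)\varphi_{\leq k})_{k+1}$. By the right-linearity in Lemma \ref{lema2ptysum32}(ii), this equals $(\varphi_{\leq k})_{k+1} + (\alpha\varphi_{\leq k})_{k+1}$; the first summand vanishes since $\varphi_{\leq k}$ has degree $\leq k$, and each homogeneous piece $\alpha_i\varphi_{\leq k}$ has lowest degree $i$ (with degree-$i$ part equal to $\alpha_i$ itself, since $(\varphi_{\leq k})_1 = id$), so $(\alpha\varphi_{\leq k})_{k+1}$ collapses to $\alpha_{k+1}$. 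For the second grouping, $(\varphi_{\leq k}(\phi_{\leq k}\varphi_{\leq k}))_{k+1} = (\varphi_{\leq k}(id+\beta))_{k+1}$. Decomposing $\varphi_{\leq k} = \sum_{i=1}^{k}\varphi_i$: the piece $\varphi_1 = id$ yields $id + \beta$, contributing $\beta_{k+1}$; for $2 \leq i \leq k$, expanding $\varphi_i(x+\beta(x))$ monomial by monomial shows every resulting term has degree either exactly $i$ or at least $i + k$ (the latter whenever at least one factor is drawn from $\beta$, whose lowest degree is $k+1$), and since $2 \leq i \leq k$ neither value equals $k+1$, so these contribute nothing.

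Equating the two evaluations gives $\alpha_{k+1} = \beta_{k+1}$, as required. The main delicate point is tracking the asymmetry between left and right composition: right composition distributes over addition by Lemma \ref{lema2ptysum32}(ii), but left composition does not, and it is precisely this asymmetry that makes the two associative regroupings of the triple product produce substantively different expressions whose forced equality is exactly the identity in the corollary.
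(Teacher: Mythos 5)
Your proof is correct, but it takes a genuinely different route from the paper's. The paper introduces the exact inverse $\psi$ of $\varphi$, invokes Theorem \ref{inverseformula66} to conclude $\phi_{\leq k}=\psi_{\leq k}$, applies the same Lemma \ref{zhankai01} expansion to the two products $\varphi\psi$ and $\psi\varphi$ (both of which vanish in degree $k+1$), and subtracts to get $(\varphi\phi)_{k+1}=(\phi\varphi)_{k+1}$. You instead stay entirely with the truncations: after the same reduction via Lemma \ref{zhankai01} to the claim $(\varphi_{\leq k}\phi_{\leq k})_{k+1}=(\phi_{\leq k}\varphi_{\leq k})_{k+1}$, you evaluate the triple product $\varphi_{\leq k}\phi_{\leq k}\varphi_{\leq k}$ in degree $k+1$ under the two associative groupings, and the degree bookkeeping (right-linearity from Lemma \ref{lema2ptysum32}(ii) for the one grouping, a monomial-by-monomial expansion for the other) extracts $\alpha_{k+1}$ and $\beta_{k+1}$ respectively. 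Your argument buys independence from Theorem \ref{inverseformula66} and from the existence of global inverses --- it is a purely local statement about $G_{k+1}(n,K)$, proved by the standard ``conjugation/sandwich'' trick; the cost is that you must carry out the degree analysis of $\varphi_i\circ(id+\beta)$ by hand, whereas the paper's version outsources that work to the already-proved inversion recurrence. One small point worth making explicit in your write-up is that associativity of composition in $K[[X]]^n$ is what licenses equating the two groupings; this is standard and implicit in the paper's group structure, so it is not a gap.
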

\begin{proof}
By Lemma \ref{zhankai01}, we find 
\begin{equation}\label{zx3141590}
	(\varphi\phi)_{k+1} = (\varphi_{\leq k}\phi_{\leq k})_{k+1}+\phi_{k+1}+\varphi_{k+1},
\end{equation}	
 Assume $\psi\in G_\infty(n,K)$ is the inverse of $\varphi.$ It follows from Theorem \ref{inverseformula66} that 
 $\phi_{\leq k}=\psi_{\leq k}.$ Apply (\ref{zx3141590}) to the products $\varphi\psi$ and $\psi\varphi,$ respectively, we see that
\begin{eqnarray*}
&&0=(\varphi \psi)_{k+1}=(\varphi\phi)_{k+1}+\psi_{k+1}-\phi_{k+1},\\
&&0=(\psi\varphi)_{k+1}=(\phi\varphi)_{k+1}+\psi_{k+1}-\phi_{k+1},
\end{eqnarray*}
and hence $(\varphi\phi)_{k+1}=(\phi\varphi)_{k+1}.$
\end{proof}

\section{Subgroups of $G_k(n,K)$}\label{secsubgroups3}
In this section we study a special kind of subgroups of $G_k(n,K).$ Now that the projection $\pi_{k,m}$ is a homomorphism for $k\geq m,$ each $G_k(n,K)$ has $k-1$ natural subgroups. Sometimes we write $G_k$ for $G_k(n,K).$

Fix a dimension $n$ and a field $K.$ For each $k\in\mathbb{N}\cup\{\infty\}$ and $1\leq m\leq k,$ define the $m$-th subgroup $G_{k,m}$ of $G_k$ by 
$$G_{k,m}:=\mathrm{Ker}\,\pi_{k,m}=\{\varphi\in K[[X]]^n/S_k|\ \varphi_{\leq m}\equiv id\ \mathrm{mod}\ S_m\}.$$
If we define $G_{k,0}:=G_k$ alone, then every transformation group $G_k$ has a filtration
$$id=G_{k,k}\subset G_{k,k-1}\subset G_{k,k-2}\subset\cdots\subset G_{k,1}\subset G_{k,0}=G_k.$$
We remark that the dimension of $G_{k,m}(n,K)$ over $K$ is $n\sum_{i=m+1}^k\binom{n+i-1}{i}.$ In particular, $\mathrm{dim}\,G_{k,m}(1,K)=k-m.$ 

\begin{theorem}\label{propabelgkk}
Let $k,m$ be integers with $k>m\geq \frac{k}{2}>0.$ Then the group $G_{k,m}$ is commutative.
\end{theorem}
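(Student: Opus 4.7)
The plan is to show that for $\varphi,\phi\in G_{k,m}$, the composition $\varphi\phi$ computed in $G_k$ equals $id+\sum_{i=m+1}^k(\varphi_i+\phi_i)$, which is manifestly symmetric in $\varphi$ and $\phi$. In other words, $G_{k,m}$ is not only abelian but is isomorphic to the additive group $\bigoplus_{i=m+1}^k K[X]_i^n$ via $\varphi\mapsto\varphi-id$.

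Concretely, I would write $\varphi=id+\sum_{i=m+1}^k\varphi_i$ and $\phi=id+\sum_{i=m+1}^k\phi_i$ modulo $S_k$, and set $\psi:=\phi-id=\sum_{i\geq m+1}\phi_i$. By Lemma \ref{lema2ptysum32}(\romannumeral2) applied to the decomposition $\varphi=id+\sum_{i=m+1}^k\varphi_i$, we have
\[
\varphi\phi=\phi+\sum_{i=m+1}^k\varphi_i(\phi)=\phi+\sum_{i=m+1}^k\varphi_i(id+\psi).
\]
The main claim is that $\varphi_i(id+\psi)\equiv\varphi_i\ \mathrm{mod}\ S_k$ for every $i\geq m+1$. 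Granting this, $\varphi\phi\equiv\phi+\sum_{i=m+1}^k\varphi_i=id+\sum_{i=m+1}^k(\phi_i+\varphi_i)\ \mathrm{mod}\ S_k$, which is symmetric, proving commutativity.

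To verify the claim, expand each component of $\varphi_i(id+\psi)$ as a homogeneous polynomial of degree $i$ evaluated on $x+\psi$: using multilinearity on monomials, one gets $\varphi_i(id)=\varphi_i$ plus a remainder in which every summand contains at least one factor $\psi_j$ with $j\geq m+1$ replacing a variable of degree $1$. Each such replacement increases the total degree by at least $m$, so the remainder lies in $S_{i+m}$. Since $i\geq m+1$ and $2m\geq k$ by hypothesis, we have $i+m\geq 2m+1\geq k+1$, so the remainder vanishes modulo $S_k$. This is the only substantive step, and it is essentially the observation already encoded in the estimate $r(k,j)=\max(j-k+1,0)$ from (\ref{smestskj31}) applied to a homogeneous factor of degree $\geq m+1$.

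I do not expect a real obstacle here: the hypothesis $m\geq k/2$ is precisely what makes the quadratic (and higher) cross-terms in the composition land above degree $k$, killing them in $G_k$. The only care needed is the bookkeeping in the degree estimate above; everything else is a direct application of Lemma \ref{lema2ptysum32} and Lemma \ref{twocompdeg6}.
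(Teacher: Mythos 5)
Your argument is correct, but it takes a genuinely different route from the paper. The paper proves Theorem \ref{propabelgkk} by induction on $k-m$: the base case $k-m=1$ is a direct computation, and the inductive step uses Lemma \ref{twocompdeg6} to write $(\varphi\phi)_{\leq k}=\varphi_k+\phi_k+(\varphi_{\leq k-1}\phi_{\leq k-1})_{\leq k}$, observes that the degree-$k$ part of the last composition vanishes under the hypothesis, and then invokes commutativity of $G_{k-1,k-r-1}$. You instead prove in one shot the stronger statement that $\varphi\phi\equiv id+\sum_{i=m+1}^k(\varphi_i+\phi_i)\ \mathrm{mod}\ S_k$, resting everything on the single degree estimate that a cross term replacing a linear factor of a degree-$i$ monomial ($i\geq m+1$) by a component of $\psi$ of degree $\geq m+1$ has total degree $\geq i+m\geq 2m+1\geq k+1$; equivalently, via (\ref{smestskj31}), $(\varphi_i\phi)_j=(\varphi_i\phi_{\leq j-i+1})_j=(\varphi_i\, id)_j$ for $j\leq k$ since $j-i+1\leq k-m\leq m$. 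What your approach buys is the explicit group structure: it shows $G_{k,m}\cong\bigoplus_{i=m+1}^kK[X]_i^n$ as an additive group whenever $m\geq k/2$, which subsumes the paper's identity (\ref{powerfork56}) and the corresponding cases of Proposition 4.3, whereas the paper's induction only delivers commutativity. One trivial bookkeeping point: with the paper's convention $S_\ell=\{\varphi\mid\varphi_i=0\ \text{for }0\leq i\leq\ell\}$, your remainder (all terms of degree $\geq i+m$) lies in $S_{i+m-1}$ rather than $S_{i+m}$, but since $i+m-1\geq 2m\geq k$ it still lies in $S_k$, so the conclusion is unaffected.
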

\begin{proof}
We prove this assertion by induction on $k-m.$ First, when $k-m=1,$ for any $\varphi,\phi\in G_{k,k-1}$ with homogeneous decomposition $\varphi=id+\varphi_k,\phi=id+\phi_k,$ it follows that
\begin{equation*}
\begin{split}
\varphi\phi  =&\  id+(\varphi\phi)_k=id+((id+\varphi_k)\phi)_k\\
=&\ id+ (\phi+\varphi_k\phi_1)_k=id+\phi_k+\varphi_k = \phi\varphi.
\end{split}
\end{equation*}
Hence $G_{k,k-1}$ is commutative. Assume now that the group $G_{k,k-r}$ is commutative for any $k,r$ with $1\leq r\leq [\frac{k}{2}].$ We then turn to the case $k-m=r+1$ with $r+1\leq[\frac{k}{2}].$ For any $\varphi,\phi\in G_{k,k-r-1}$ with $\varphi=id+\sum_{i=0}^r\varphi_{k-i},\phi=id+\sum_{i=0}^r\phi_{k-i},$ it follows by Lemma \ref{twocompdeg6} that
\begin{equation*}
(\varphi\phi)_{\leq k}=\varphi_k+\phi_k+(\varphi_{\leq k-1}\phi_{\leq k-1})_{\leq k}.
\end{equation*}
Notice that $(\varphi_{\leq k-1}\phi_{\leq k-1})_k=0$ since $2r+1<k.$ Then 
$$(\varphi_{\leq k-1}\phi_{\leq k-1})_{\leq k}=(\varphi_{\leq k-1}\phi_{\leq k-1})_{\leq k-1}.$$ 
Using the assumption that $G_{k-1,k-r-1}$ is commutative, we have 
$$(\varphi_{\leq k-1}\phi_{\leq k-1})_{\leq k-1}=(\phi_{\leq k-1}\varphi_{\leq k-1})_{\leq k-1},$$ which implies that $(\varphi\phi)_{\leq k}=(\phi\varphi)_{\leq k}.$ Hence $G_{k,k-r-1}$ is commutative. This finishes the proof by induction on $k-m.$
\end{proof}

{In particular}, Theorem \ref{propabelgkk} implies that $G_{k,k-1},k>1,$ as a group, is isomorphic to the vector space $K[X]_k^n.$ Moreover, for any $\varphi\in G_{k,k-1},$ it follows that
\begin{eqnarray}\label{powerfork56}
(\varphi^m)_{\leq k}= id+m\cdot \varphi_k.
\end{eqnarray}

Next we list all abelian subgroups of $G_k(1,K).$ The linear structure of $K^m$ makes it an abelian group with the operator '+'. However $K^2$ has another group structure. Now we define a product on $K^2$ associated with a constant $\lambda\in K$ by
$$(x_1,y_1)\circ(x_2,y_2)=(x_1+x_2,y_1+y_2+\lambda x_1x_2).$$
It is not difficult to verify that
$$(x_1,y_1)\circ(x_2,y_2)\circ\cdots\circ(x_s,y_s)=\left(\sum x_i,\sum y_i+\lambda\sum_{i<j}x_ix_j\right).$$
Hence $(K^2,\circ)$ is an abelian group, and is denoted by $\Omega_\lambda.$ In fact, $\Omega_\lambda$ is isomorphic to $K^2$ via a group isomorphism 
$$(K^2,+)\rightarrow (K^2,\circ),\ \  (x,y)\mapsto (x,y+\frac{1}{2}\lambda x^2).$$

If $m\geq 1$ and $n=1,$ every polynomial vector in $G_{k,m}$ has the form $x+\sum_{i=m+1}^k a_ix^i.$ So we can identify $G_{k,m}$ with $K^{k-m}$ in a natural way. 
\begin{proposition}
When $n=1,$ all abelian groups in $\{G_{i,j}\}_{i\geq j}$ are given as follows:
\begin{itemize}
	\item[(\romannumeral1)] $G_{1,0}=K^*,$
	\item[(\romannumeral2)] $G_{2k,m}=K^{2k-m}$ with  $2k\geq m\geq k\geq 1,$
	\item[(\romannumeral3)] $G_{2k+1,k}=K^{k-1}\oplus \Omega_{k+1}$ with $k\geq 1,$
	\item[(\romannumeral4)] $G_{2k+1,m}=K^{2k+1-m}$ with $2k\geq m-1\geq k\geq 1.$
\end{itemize}
\end{proposition}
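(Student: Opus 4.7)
The strategy is a four-way case analysis in which cases (ii) and (iv) follow from Theorem~\ref{propabelgkk} together with a short composition calculation, while the diagonal boundary case (iii) requires a direct computation identifying the $\Omega_{k+1}$ structure; completeness is established by exhibiting explicit non-commuting witnesses in every other $G_{i,m}$.

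Case (i) is immediate from the jacobian identification (\ref{matrxkxn198}): $G_{1,0}=G_1(1,K)\cong GL_1(K)=K^\ast$. For (ii) and (iv), the hypotheses $k\le m\le 2k$ (respectively $k+1\le m\le 2k+1$) place us inside the range $i/2\le m<i$ where Theorem~\ref{propabelgkk} gives commutativity. To identify the group with $K^{i-m}$, write $\varphi=x+\sum_{j=m+1}^{i}a_j x^j$ and $\phi=x+\sum_{j=m+1}^{i}b_j x^j$ and expand
$$\varphi\phi=\phi+\sum_{j=m+1}^{i}a_j\phi^j\ \mathrm{mod}\ S_i.$$
Any correction term in $\phi^j$ beyond $x^j$ has degree at least $(j-1)+(m+1)\ge 2m+1$; under the hypotheses of (ii) and (iv) this already exceeds $i$, so $\phi^j\equiv x^j\ \mathrm{mod}\ S_i$ for every $j\ge m+1$, and composition collapses to coordinate-wise addition.

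The crux is case (iii), the boundary $G_{2k+1,k}$, where $2m+1=i$ and exactly one correction survives. A multinomial expansion yields $\phi^{k+1}\equiv x^{k+1}+(k+1)\,b_{k+1}\,x^{2k+1}\ \mathrm{mod}\ S_{2k+1}$ (no other $b_j$ contributes, since $(j-1)+(k+1)>2k+1$ for $j\ge k+2$), while $\phi^j\equiv x^j$ for $j\ge k+2$. Substituting into $\varphi\phi=\phi+\sum_{j=k+1}^{2k+1}a_j\phi^j$ gives
$$\varphi\phi\equiv x+\sum_{j=k+1}^{2k+1}(a_j+b_j)\,x^j+(k+1)\,a_{k+1}\,b_{k+1}\,x^{2k+1}\ \mathrm{mod}\ S_{2k+1},$$
which is symmetric in $(a,b)$, so the group is abelian. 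Reading off coordinates, the pair $(a_{k+1},a_{2k+1})$ combines under exactly the $\Omega_{k+1}$ law, while the middle coordinates $a_{k+2},\ldots,a_{2k}$ add independently and decouple, producing the direct product $K^{k-1}\oplus\Omega_{k+1}$.

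For completeness I must rule out abelianness in every remaining $G_{i,m}$: namely $G_{i,0}$ for $i\ge 2$ and $G_{i,m}$ with $1\le m<\lfloor i/2\rfloor$ (so the case (iii) boundary is excluded). The plan is to test the pair $\varphi=x+x^{m+1}$, $\phi=x+x^{i-m}$ (admissible because $i-m\ge m+1$), which a short expansion shows satisfies $\varphi\phi-\phi\varphi\equiv(2m+1-i)\,x^i\ \mathrm{mod}\ S_i$, nonzero when $i>2m+1$ and $\mathrm{char}\,K\nmid(i-2m-1)$. The main obstacle is the small-characteristic edge where this particular witness vanishes; there one can iterate by substituting $\phi=x+x^{i-m-1}$ to obtain a nonzero commutator at degree $i-1$, or perturb one summand with a second monomial, since the space of non-commuting witnesses is large. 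For $G_{i,0}$ with $i\ge 2$ a linear-plus-quadratic example (e.g. $ax$ versus $x+x^2$) produces a nonvanishing degree-$2$ commutator in any characteristic. This exhausts the list and establishes the proposition.
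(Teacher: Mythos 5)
The paper itself omits the proof of this proposition (``easy to check them one by one''), so there is nothing to compare line by line; your write-up is essentially the omitted verification, organized the natural way: cases (ii) and (iv) from Theorem \ref{propabelgkk} plus the degree count $(j-1)+(m+1)\ge 2m+1>i$, the boundary case (iii) by the single surviving cross term $(k+1)a_{k+1}b_{k+1}x^{2k+1}$ (which correctly identifies $\Omega_{k+1}$ and, rightly, is \emph{not} covered by Theorem \ref{propabelgkk} since $k<(2k+1)/2$), and completeness by explicit witnesses. Your witness computation $\varphi\phi-\phi\varphi\equiv(2m+1-i)x^i$ for $\varphi=x+x^{m+1}$, $\phi=x+x^{i-m}$ is correct, and your hedged fallback actually closes rigorously: if $\mathrm{char}\,K$ divides $i-2m-1$ it cannot divide the consecutive integer $i-2m-2$, the coefficient produced at degree $i-1$ by $\phi=x+x^{i-m-1}$ (which is admissible since the failure forces $i\ge 2m+3$). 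You could state that instead of ``perturb with a second monomial.''

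The one genuine gap is the case $G_{i,0}$, $i\ge 2$. Your witness $\varphi=ax$, $\phi=x+x^2$ gives commutator coefficient $a-a^2$, which is nonzero only if $a\notin\{0,1\}$; the claim that this works ``in any characteristic'' fails precisely for $K=\mathbb{F}_2$, where $K^*=\{1\}$ and hence $G_{i,0}=G_{i,1}$ as subgroups. Over $\mathbb{F}_2$ the groups $G_{2,0}=G_{2,1}$ and $G_{3,0}=G_{3,1}$ are therefore abelian (the latter is $\Omega_2=\Omega_0=K^2$ in characteristic $2$), so the completeness assertion for the index set, as literally stated, already fails for the proposition itself in this degenerate case; for $|K|\ge 3$ your witness is fine, and for $K=\mathbb{F}_2$ with $i\ge 4$ one falls back on the non-commutativity of $G_{i,1}$. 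You should either record the hypothesis $|K|\ge 3$ for part of the completeness claim or note that over $\mathbb{F}_2$ the exceptional abelian groups $G_{2,0},G_{3,0}$ coincide with listed ones. A last cosmetic point: the family $\{G_{i,j}\}_{i\ge j}$ formally includes $i=\infty$; these are never abelian since they surject onto non-abelian finite-level quotients, which is worth a sentence.
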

\begin{proof}
The proof is omitted here. It is easy to check them one by one. 	
\end{proof}
For example, let $\varphi=x+\sum_{i=k+1}^{2k}a_ix^i,\phi=x+\sum_{i=k+1}^{2k}b_ix^i\in G_{2k,k}(1,K),$ then 
$$\varphi\phi\equiv \phi\varphi\equiv \varphi+\phi \ \mathrm{mod}\ x^{2k+1}.$$

\section{Periodic series}\label{periptssect6}
Recall that an element $g$ in an infinite group $G$ is periodic if there exists a positive integer $s$ such that $g^s=e,$ where $e$ denotes the identity element of $G,$ and let $\mathrm{ord}(g)$ denote the order of $g.$ We say $g\in G$ is \emph{$p$-periodic} if $\mathrm{ord}(g)=p.$

As mentioned in the introduction, the result about periodic series in $G_\infty(1,K)$ is well known, see \cite{SalomonB1948,Stephen1970,Cheon2013,Anthony2015}. We will discuss the general case $n\geq 2.$ First we observe the following conclusion:

\begin{theorem}\label{char0oreuqs}
If $\mathrm{char}\,K=0$ and $\varphi\in G_\infty(n,K)$ is periodic, then $\mathrm{ord}(\varphi)=\mathrm{ord}(\varphi_1).$
\end{theorem}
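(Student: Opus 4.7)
The plan is to prove the two divisibilities $\mathrm{ord}(\varphi_1)\mid \mathrm{ord}(\varphi)$ and $\mathrm{ord}(\varphi)\mid \mathrm{ord}(\varphi_1)$ separately. The first is immediate from the fact, noted in Section \ref{firsectintr67}, that $(\cdot)_1\colon G_\infty(n,K)\to GL_n(K)$ is a group homomorphism: if $\varphi^m=id$ then $\varphi_1^m=id$, so $\mathrm{ord}(\varphi_1)$ is finite and divides $\mathrm{ord}(\varphi)$. The heart of the argument is the reverse divisibility, which I would attack by lifting to $\varphi$ the triviality of a suitable power.

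Setting $d=\mathrm{ord}(\varphi_1)$, I would define $\psi:=\varphi^d$. Then $\psi_1=\varphi_1^d=id$, i.e.\ $\psi$ lies in the kernel of $\pi_{\infty,1}$. My goal reduces to showing $\psi=id$, for then $\mathrm{ord}(\varphi)\mid d$. Assume toward contradiction that $\psi\ne id$ and let $k\geq 2$ be the smallest integer with $\psi_k\ne 0$; equivalently, $\psi_{\leq k-1}=id$, so the image of $\psi$ in $G_k(n,K)$ lies in the subgroup $G_{k,k-1}$ of Section \ref{secsubgroups3}.

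Since $\varphi$ is periodic, so is $\psi$; let $q=\mathrm{ord}(\psi)$. Reducing mod $S_k$ and applying the abelian identity \eqref{powerfork56} for $G_{k,k-1}$, I obtain
\begin{equation*}
id=(\psi^q)_{\leq k}=id+q\cdot \psi_k,
\end{equation*}
hence $q\psi_k=0$ in $K[X]_k^n$. Because $\mathrm{char}\,K=0$, the integer $q\geq 1$ is invertible in $K$, so $\psi_k=0$, contradicting the choice of $k$. Thus $\psi=id$ and the proof is complete.

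There is no serious obstacle here beyond setting up the reduction cleanly: the only place where $\mathrm{char}\,K=0$ enters is in dividing by $q$ at the last step, and the abelian structure of $G_{k,k-1}$ together with formula \eqref{powerfork56} does all of the combinatorial work. The mildly delicate point is ensuring that the formula \eqref{powerfork56}, stated for the finite-dimensional group $G_{k,k-1}$, may be applied to the class of $\psi$ in $G_k(n,K)$; this is immediate from the fact that $G_\infty(n,K)$ is the inverse limit of the $G_k(n,K)$ and that $\pi_{\infty,k}$ is a group homomorphism, so that $\pi_{\infty,k}(\psi^q)=\pi_{\infty,k}(\psi)^q$.
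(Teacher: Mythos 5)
Your proposal is correct and follows essentially the same route as the paper: reduce to the case $\varphi_1=id$ by passing to $\varphi^{d}$ with $d=\mathrm{ord}(\varphi_1)$, then kill the lowest nonzero homogeneous part using the identity (\ref{powerfork56}) for the abelian subgroup $G_{k,k-1}$ together with $\mathrm{char}\,K=0$. The only cosmetic difference is that you phrase the second step as a minimal-counterexample argument while the paper runs an induction on $k$ showing each $\varphi_k=0$; these are interchangeable.
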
	
\begin{proof}
We first prove the special case $\mathrm{ord}(\varphi_1)=1.$ Now suppose $\varphi^s=id$ for some positive integer $s,$ and let 
$\varphi=\sum_{i=1}^\infty\varphi_i$ be the homogeneous decomposition of $\varphi$ with $\varphi_1=id.$ We intend to prove $\varphi_k=0$ for any $k\geq 2$ by induction on $k.$ 
\begin{itemize}
	\item[(\romannumeral1)] When $k=2,$ it follows from (\ref{powerfork56}) that $0=(\varphi^s)_2=s\cdot \varphi_2,$ and hence $\varphi_2=0.$ 
	\item[(\romannumeral2)] If $\varphi_2=\varphi_3=\cdots=\varphi_k=0$ for $k\geq 2,$ then $id+\varphi_{k+1}\in G_{k+1,k}.$ Using (\ref{powerfork56}) again, we see that $0=(\varphi^s)_{k+1}=s\cdot\varphi_{k+1},$ which leads to $\varphi_{k+1}=0.$
\end{itemize}

We then prove the general case. Put $s=\mathrm{ord}(\varphi)$ and $t=\mathrm{ord}(\varphi_1).$ Since the projection $\pi_{\infty,1}:G_\infty\rightarrow G_1$ is a homomorphism, $t$ must divide $s.$ Let $\phi=\varphi^t.$ Then $\phi_1=\varphi_1^t=id$ and $\phi$ is periodic since $\phi^{\frac{s}{t}}=\varphi^s=id.$ Now apply the above conclusion of the special case to $\phi,$ we find that $\phi=id,$ that is, $\varphi^t=id.$
\end{proof}
\begin{remark}
From the above proof, it is not difficult to see that Theorem \ref{char0oreuqs} still holds for a commutative ring without torsion.
\end{remark}

\subsection{Orbits of periodic series}
In this section we always assume that the period $p>1$ and $p\notin \mathrm{char}\,K\cdot\mathbb{Z}.$ Let $\omega\in K$ be a primitive $p$-th root of unit in $K$, i.e. $\mathrm{ord}(\omega)=p$ in the multiplicative group $K^*.$ Every diagonal matrix in $GL_n(K)$ of order $p$ has the form $\mathrm{diag}(\omega^{\lambda_1},\cdots,\omega^{\lambda_n}),$ where $\lambda=(\lambda_1,\cdots,\lambda_n)\in\mathbb{Z}^n$ is coprime to $p.$ We try to find all $p$-periodic series $\varphi$ in $G_\infty(n,K)$ with the initial condition that $\varphi_1$ is a diagonal matrix.

We first give a simple lemma.
\begin{lemma}\label{lemfaneq53}
For any $\varphi\in K[[X]]^n$ and integers $m,k\geq 1,$ we have 
$$(\varphi^{m+1})_{k+1}=(\varphi^m)_{k+1}\varphi_1+[(\varphi^m)_{\leq k}\varphi]_{k+1}.$$
\end{lemma}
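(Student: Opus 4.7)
The plan is to use the identity $\varphi^{m+1} = \varphi^m \circ \varphi$ and split the outer factor $\varphi^m$ into its homogeneous pieces. By Lemma \ref{lema2ptysum32}(ii), composition is additive in the outer argument, so
\begin{equation*}
(\varphi^{m+1})_{k+1} = \sum_{j \geq 1}\bigl((\varphi^m)_j\,\varphi\bigr)_{k+1},
\end{equation*}
and the problem reduces to tracking which indices $j$ actually contribute to degree $k+1$.

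The key observation is that since $\varphi$ has minimal degree one, each $(\varphi^m)_j\,\varphi$ has minimal degree $j$. Therefore summands with $j \geq k+2$ contribute nothing. For $j=k+1$, the degree-$(k+1)$ part of $(\varphi^m)_{k+1}\,\varphi$ is realized only when the linear part $\varphi_1$ is substituted into every variable slot, so the contribution equals $(\varphi^m)_{k+1}\,\varphi_1$. For $1 \leq j \leq k$, the remaining terms reassemble, again by Lemma \ref{lema2ptysum32}(ii), into $\bigl((\varphi^m)_{\leq k}\,\varphi\bigr)_{k+1}$. Adding the three contributions yields the claimed formula.

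Alternatively, one could invoke Lemma \ref{zhankai01} directly with the substitution $\varphi \leftarrow \varphi^m$, $\phi \leftarrow \varphi$, using that $(\cdot)_1$ is a homomorphism to replace $(\varphi^m)_1$ by $\varphi_1^m$; the task then reduces to verifying that $\bigl((\varphi^m)_{\leq k}\varphi_{\leq k}\bigr)_{k+1} + \varphi_1^m\,\varphi_{k+1}$ coincides with $\bigl((\varphi^m)_{\leq k}\,\varphi\bigr)_{k+1}$, which again boils down to the same degree-tracking at the $j=1$ component. There is no genuine obstacle in either route; the lemma is a bookkeeping exercise, and the only subtlety is ensuring that truncating $\varphi^m$ and $\varphi$ at a sufficiently high degree leaves the degree-$(k+1)$ output unchanged, which is exactly the content of the estimate (\ref{smestskj31}).
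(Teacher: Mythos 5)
Your proof is correct and takes essentially the same route as the paper's: the paper likewise writes $(\varphi^{m+1})_{k+1}$ as $[(\varphi^m)_{\leq k+1}\varphi]_{k+1}$, splits $(\varphi^m)_{\leq k+1}=(\varphi^m)_{\leq k}+(\varphi^m)_{k+1}$ via Lemma \ref{lema2ptysum32}(ii), and extracts the degree-$(k+1)$ part using exactly your two degree observations (pieces of degree $\geq k+2$ do not contribute, and $[(\varphi^m)_{k+1}\varphi]_{k+1}=(\varphi^m)_{k+1}\varphi_1$). No substantive difference.
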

\begin{proof}
By the fact $(\varphi^m)_{\leq k+1}=(\varphi^m)_{\leq k}+(\varphi^m)_{k+1}$ and Lemma \ref{lema2ptysum32}, one has 
\begin{eqnarray}\label{eqliy749}
(\varphi^m)_{\leq k+1}\varphi=(\varphi^m)_{\leq k}\varphi+(\varphi^m)_{k+1}\varphi.
\end{eqnarray}
Notice that $[(\varphi^m)_{\leq k+1}\varphi]_{k+1}=(\varphi^{m+1})_{k+1},$ hence the lemma holds by taking terms of degree $k+1$ of the equality (\ref{eqliy749}).
\end{proof}

 Suppose $\varphi=\varphi_1+\sum_{i=2}^k\varphi_i$ satisfies $(\varphi^p)_{\leq k}=id$ for some $k\geq 1.$ Now we solve the equation $((\varphi+\varphi_{k+1})^p)_{k+1}=0$ about $\varphi_{k+1}\in K[X]_{k+1}^n.$ Put $\phi=\varphi+\varphi_{k+1}.$ For the fixed integers $p$ and $k,$ we explore the relationship between $(\phi^{m+1})_{k+1}$ and $(\phi^{m})_{k+1}.$ It follows from Lemma \ref{zhankai01} and Lemma \ref{lemfaneq53} that  
\begin{equation}\label{eqammp1ps}
\begin{split}
(\phi^{m+1})_{k+1}&=(\phi^m\phi)_{k+1}=((\varphi^m)_{\leq k}\varphi)_{k+1}+\varphi_1^m\varphi_{k+1}+(\phi^m)_{k+1}\varphi_1    \\
&=(\varphi^{m+1})_{k+1}-(\varphi^m)_{k+1}\varphi_1+\varphi_1^m\varphi_{k+1}+(\phi^m)_{k+1}\varphi_1
\end{split}
\end{equation}
By induction on $m,$ the equality (\ref{eqammp1ps}) gives
{
\begin{eqnarray}\label{relationpps33}
(\phi^m)_{k+1}=(\varphi^m)_{k+1}+\sum_{i=0}^{m-1}\varphi_1^i\varphi_{k+1}\varphi_1^{m-1-i}
\end{eqnarray}
}
for any $m\geq 1.$ Here $\varphi_1^0=id.$ {Putting $m=p$ in equality (\ref{relationpps33}),} we see that $\varphi\in G_\infty(n,K)$ is $p$-periodic if and only if 
\begin{eqnarray}\label{iffppts56}
((\varphi_{\leq k})^p)_{k+1}+\sum_{i=0}^{p-1}\varphi_1^i\varphi_{k+1}\varphi_1^{p-1-i}=0
\end{eqnarray}
for any $k\geq 1$ and $\mathrm{ord}(\varphi_1)=p.$ This leads to the following result:
\begin{theorem}\label{theomcogaking}
Suppose $K,p$ are as above. Let $\varphi\in G_\infty(n,K)$ be a $p$-periodic series. If $\varphi_1$ is a diagonalizable matrix in $GL_n(K),$ then $\varphi$ is conjugate to $\varphi_1,$ i.e. there exists $\phi\in G_\infty(n,K)$ such that $\varphi=\phi^{-1}\varphi_1 \phi.$
\end{theorem}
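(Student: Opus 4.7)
My plan is to write $\phi$ down by a single averaging formula rather than build it degree by degree. Since $p$ is invertible in $K$, set
\[
\phi \;:=\; \frac{1}{p}\sum_{i=0}^{p-1}\varphi_1^{-i}\,\varphi^{i}\ \in K[[X]]^n,
\]
with $\varphi^{0}:=id$ and each $\varphi_1^{-i}\in GL_n(K)\subset G_\infty(n,K)$ acting by left composition. The hope is that this $\phi$ satisfies $\phi\varphi = \varphi_1\phi$, so that $\varphi = \phi^{-1}\varphi_1\phi$ as required.

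First I would check that $\phi$ lies in $G_\infty(n,K)$. Applying the homomorphism $(\cdot)_1$ to each summand gives $(\varphi_1^{-i}\varphi^i)_1=\varphi_1^{-i}\varphi_1^{i}=id$, so $\phi_1=id\in GL_n(K)$ and $\phi\in G_\infty(n,K)$. Next, using the right-linearity of composition (Lemma \ref{lema2ptysum32}(ii)),
\[
\phi\varphi \;=\; \frac{1}{p}\sum_{i=0}^{p-1}\varphi_1^{-i}\varphi^{i+1}.
\]
Applying $(\cdot)_1$ to $\varphi^p=id$ yields $\varphi_1^p=id$, so the $i=p-1$ term equals $\varphi_1^{-(p-1)}\varphi^{p}=\varphi_1\cdot id = \varphi_1$. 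Reindexing $j=i+1$ and using the left-linearity of composition by the linear map $\varphi_1$ (Lemma \ref{lema2ptysum32}(i)),
\[
\phi\varphi \;=\; \frac{1}{p}\sum_{j=0}^{p-1}\varphi_1^{1-j}\varphi^{j} \;=\; \varphi_1\cdot\frac{1}{p}\sum_{j=0}^{p-1}\varphi_1^{-j}\varphi^{j} \;=\; \varphi_1\phi,
\]
which gives $\varphi=\phi^{-1}\varphi_1\phi$ as desired.

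The step I expect to be the main obstacle is psychological rather than technical: convincing oneself that this elementary averaging really is legal in the non-commutative, non-linear group $G_\infty(n,K)$. The point is that $G_\infty(n,K)$ sits inside the $K$-vector space $K[[X]]^n$ where the sum makes sense, and although the group law is far from linear in general, Lemma \ref{lema2ptysum32} supplies the exact two linearities used: right-multiplication by an arbitrary series and left-multiplication by a linear map. This also explains why the averaging does not actually consume the diagonalizability of $\varphi_1$ beyond, in hindsight, putting it into the normal form $\mathrm{diag}(\omega^{\lambda_1},\dots,\omega^{\lambda_n})$. By contrast, a direct inductive proof driven by the constraint (\ref{iffppts56}) would have to invert the commutator operator $\psi\mapsto\varphi_1\psi-\psi\varphi_1$ on each $K[X]_k^n$ and cancel its resonant kernel by hand, a bookkeeping that is hidden inside the closed-form sum above.
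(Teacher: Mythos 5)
Your proof is correct, and it takes a genuinely different route from the paper's. The paper builds the conjugator $\phi$ degree by degree: at each stage it must solve the homological equation $\varphi_1 u-u\varphi_1=(\phi\varphi)_{k+1}$ (equation (\ref{goalshi002})), which it does by putting $\varphi_1$ in diagonal form, reading off the eigenvalues $\omega^{\lambda_i}-\omega^{(\lambda,\alpha)}$ of the commutator operator $t$ on the monomial basis of $K[X]_{k+1}^n$, and invoking the periodicity identity (\ref{iffppts56}) through the averaging operator $R$ to kill the obstruction on the resonant modes where that eigenvalue vanishes. Your closed-form conjugator $\phi=\frac{1}{p}\sum_{i=0}^{p-1}\varphi_1^{-i}\varphi^{i}$ is the formal-power-series version of Bochner's linearization trick, and the verification of $\phi\varphi=\varphi_1\phi$ uses only associativity of composition, $\varphi^p=id$, $\varphi_1^p=id$, and the two linearities of Lemma \ref{lema2ptysum32} (together with the unstated but immediate compatibility with scalar multiplication, valid since $\varphi_1$ is linear and substitution is $K$-linear in the outer series); $\phi_1=id$ then guarantees $\phi\in G_\infty(n,K)$. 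This buys a lot: the argument is non-inductive, gives an explicit conjugator, and---as you note---never uses diagonalizability of $\varphi_1$ or the presence of a primitive $p$-th root of unity in $K$, only that $p$ is invertible in $K$; so you have in fact proved a strictly stronger statement than Theorem \ref{theomcogaking}. What the paper's longer route buys in exchange is the explicit resonance machinery (the operators $t$ and $R$ and identity (\ref{iffppts56})), which it reuses to derive the recursive constraint of Theorem \ref{thm666pp}; your argument does not produce that by-product.
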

\begin{proof}
Notice that $(A^{-1}\varphi A)_1=A^{-1}\varphi_1 A$ for any $A\in GL_n(K)$ and conjugacy is an equivalence relation, it is sufficient to prove the case where $\varphi_1$ is a diagonal matrix. 
	
We prove this theorem by constructing a series $\phi\in G_\infty(n,K)$ so that $\varphi=\phi^{-1}\varphi_1 \phi,$ or equivalently, 
$(\phi\varphi-\varphi_1 \phi)_k=0$ for any $k\in\mathbb{N}_{\geq 1}.$ We divide our proof in six steps.
\begin{itemize}
\item[(\romannumeral1).] First, take $\phi_1=id.$ Now assume that $\phi=\sum_{i=1}^k \phi_i,\phi_i\in K[X]_i^n$ satisfies $\varphi_{\leq k}=(\phi^{-1}\varphi_1\phi)_{\leq k}.$ It follows from Lemma \ref{zhankai01} that 
$$[(\phi+\phi_{k+1})\varphi-\varphi_1(\phi+\phi_{k+1})]_{k+1}=(\phi\varphi)_{k+1}+\phi_{k+1}\varphi_1-\varphi_1\phi_{k+1}.$$
Then our goal is to find $\phi_{k+1}\in K[X]_{k+1}^n$ such that 
\begin{eqnarray}\label{goalshi002}
(\phi\varphi)_{k+1}+\phi_{k+1}\varphi_1-\varphi_1\phi_{k+1}=0.
\end{eqnarray}
\item[(\romannumeral2).]  Next, consider the operator 
$$T:K[X]_{k+1}^n\rightarrow K[X]_{k+1}^n,\ \ T(u)=[(\phi+u)^{-1}\varphi_1(\phi+u)]_{k+1}.$$
We claim that 
$$T(u)=\varphi_1 u-u\varphi_1+\varphi_{k+1}-(\phi\varphi)_{k+1}.$$
By Theorem \ref{inverseformula66} and the assumption $\varphi_{\leq k}=(\phi^{-1}\varphi_1\phi)_{\leq k},$ we have 
$$[(\phi+u)^{-1}\varphi_1(\phi+u)]_{\leq k+1}=\varphi_{\leq k}+T(u).$$
 To prove the above claim, it is sufficient to show that 
 \begin{eqnarray}\label{prfclimtu09}
 [\varphi_1(\phi+u)]_{k+1}=[(\phi+u)(\varphi_{\leq k}+\varphi_1 u-u\varphi_1+\varphi_{k+1}-(\phi\varphi)_{k+1})]_{k+1}.
 \end{eqnarray}
The left hand side of (\ref{prfclimtu09}), denoted by $LHS,$ is given by
$$LHS=\varphi_1 u.$$
Using the fact $\phi_1=id$ and Lemma \ref{zhankai01}, the right hand side of (\ref{prfclimtu09}), 
denoted by $RHS,$ is given by 
$$RHS=(\phi_{\leq k}\varphi_{\leq k})_{k+1}+\varphi_1u+\varphi_{k+1}-(\phi\varphi)_{k+1}=\varphi_1u,$$
and hence the equality (\ref{prfclimtu09}) holds. The claim is proved.

\item[(\romannumeral3).]  Let $t(u)=\varphi_1 u-u\varphi_1$ be the linear term of $T(u).$ Since $(\phi+u)^{-1}\varphi_1(\phi+u)$ is always $p$-periodic, we apply (\ref{iffppts56}) to $(\phi+u)^{-1}\varphi_1(\phi+u)$ and $\varphi$ to obtain that
\begin{equation}\label{twoequgscog90}
	\begin{split}
& ((\varphi_{\leq k})^p)_{k+1}+\sum_{i=0}^{p-1}\varphi_1^i\varphi_{k+1}\varphi_1^{p-1-i}=0,\\
& ((\varphi_{\leq k})^p)_{k+1}+\sum_{i=0}^{p-1}\varphi_1^iT(u)\varphi_1^{p-1-i}=0.
\end{split}
\end{equation}
Now define a linear map $R:K[X]_{k+1}^n\rightarrow K[X]_{k+1}^n$ by 
$$R(u)=\sum_{i=0}^{p-1}\varphi_1^i u\varphi_1^{p-1-i}.$$ 
According to (\ref{twoequgscog90}), we see that $R(T(u)-\varphi_{k+1})=0$ for any $u\in K[X]_{k+1}^n.$ 

\item[(\romannumeral4).]  Suppose $\varphi_1=\mathrm{diag}(\omega^{\lambda_1},\cdots,\omega^{\lambda_n}),$ where $\omega$ is a primitive $p$-th root of unity and $\lambda=(\lambda_1,\cdots,\lambda_n)\in\mathbb{Z}^n$ is coprime to $p.$ Note that the set of monomials $\{X_{i,\alpha}\}$ forms a basis for $K[X]_{k+1}^n,$ where the index $i$ runs over $\{1,\cdots,n\}$ and the weight $\alpha$ runs over $\mathbb{N}^n$ with $|\alpha|=\sum \alpha_j=k+1.$ Then 
\begin{equation*}
R(X_{i,\alpha})=\sum_{j=1}^p\omega^{-(\lambda,\alpha)}\cdot \omega^{(\lambda_i-(\lambda,\alpha))j}X_{i,\alpha}
=\left\{\begin{aligned}
	0\ \ \ \ \ & , & \mbox{if}\ p\nmid (\lambda_i-(\lambda,\alpha)),\\
	p\omega^{-(\lambda,\alpha)} X_{i,\alpha}&, & \mbox{otherwise},
\end{aligned}\right. \end{equation*}
where $(\lambda,\alpha)=\sum_{j=1}^n \lambda_j\alpha_j.$ Similarly, it follows that $t(X_{i,\alpha})=(\omega^{\lambda_i}-\omega^{(\lambda,\alpha)})X_{i,\alpha}.$ It is easy to see that $R\circ t=0.$ Since $R$ is linear,  then $R((\phi\varphi)_{k+1})=0.$ 

\item[(\romannumeral5).]  Now let $(\phi\varphi)_{k+1}=\sum a_{i,\alpha}X_{i,\alpha}$ and let $u=\sum u_{i,\alpha}X_{i,\alpha}.$ When $p\nmid (\lambda_i-(\lambda,\alpha)),$ we take $u_{i,\alpha}=\frac{a_{i,\alpha}}{\omega^{\lambda_i}-\omega^{(\lambda,\alpha)}}.$ When $p| (\lambda_i-(\lambda,\alpha)),$ the condition $R((\phi\varphi)_{k+1})=0$ forces that $a_{i,\alpha}=0,$ and we take $u_{i,\alpha}=0.$
Hence $u_0=\sum u_{i,\alpha}X_{i,\alpha}$ satisfies the equation $t(u_0)-(\phi\varphi)_{k+1}=0,$ that is, $u_0$ is a solution to (\ref{goalshi002}).

\item[(\romannumeral6).] Finally, by induction on $k,$ we have constructed $\phi=\sum_{k=1}^\infty \phi_i$ so that $\phi\varphi=\varphi_1 \phi.$ 
\end{itemize}
\end{proof}
\begin{remark}
Theorem \ref{theomcogaking} shows that every $p$-periodic series $\varphi\in G_\infty(n,K)$ with $\varphi_1$ diagonalizable is   
determined by an implicit function $\phi(y)=\varphi_1\phi(x)$ of $x$ and $y$ for some $\phi\in G_\infty(n,K).$ In particular, when $p=2$ and 
$\varphi_1=-id,$ the equation $\phi(x)+\phi(y)=0$ is symmetric about $x$ and $y.$
\end{remark}
\begin{corollary}
If $\varphi\in G_\infty(n,\mathbb{C})$ is periodic, then $\varphi$ is conjugate to $\varphi_1.$
\end{corollary}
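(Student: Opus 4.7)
The plan is to reduce this corollary directly to Theorem \ref{theomcogaking}. The only hypothesis of that theorem not explicitly assumed here is the diagonalizability of $\varphi_1$, so the entire task is to verify this in the complex case.

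Let $p=\mathrm{ord}(\varphi)$. If $p=1$ then $\varphi=id$ and the statement is trivial, so assume $p\geq 2$. Since $\mathrm{char}\,\mathbb{C}=0$, the side condition $p\notin\mathrm{char}\,\mathbb{C}\cdot\mathbb{Z}$ of Theorem \ref{theomcogaking} is automatic. Because the map $(\cdot)_1:G_\infty(n,\mathbb{C})\to GL_n(\mathbb{C})$ is a group homomorphism, one has $\varphi_1^p=(\varphi^p)_1=id$, so $\varphi_1$ is an element of finite order in $GL_n(\mathbb{C})$.

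The next step invokes the classical fact that a finite-order matrix over $\mathbb{C}$ is diagonalizable: the minimal polynomial of $\varphi_1$ divides $x^p-1=\prod_{j=0}^{p-1}(x-\omega^j)$, which splits into pairwise distinct linear factors in $\mathbb{C}[x]$; hence the minimal polynomial itself is a product of distinct linear factors, and $\varphi_1$ is diagonalizable. Theorem \ref{theomcogaking} now yields $\phi\in G_\infty(n,\mathbb{C})$ with $\varphi=\phi^{-1}\varphi_1\phi$.

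There is essentially no obstacle here: all the work is contained in Theorem \ref{theomcogaking}, and the role of $\mathbb{C}$ in this corollary is simply to supply, for free, both the primitive $p$-th roots of unity needed to apply that theorem and the diagonalizability of any finite-order matrix.
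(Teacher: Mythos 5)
Your proposal is correct and follows exactly the paper's route: the paper also deduces the corollary immediately from Theorem \ref{theomcogaking} together with the fact that every finite-order matrix in $GL_n(\mathbb{C})$ is diagonalizable. You merely spell out the standard minimal-polynomial justification of that fact and the trivial case $p=1$, which the paper leaves implicit.
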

\begin{proof}
This is an immediate consequence of Theorem \ref{theomcogaking} and a fact that every periodic matrix in $GL_n(\mathbb{C})$ is diagonalizable.
\end{proof}

We next consider a special case $\varphi_1=\omega\cdot id$ with $\mathrm{ord}(\omega)=p.$ By (\ref{relationpps33}), the equation $((\varphi_{\leq k}+\varphi_{k+1})^p)_{k+1}=0$ is equivalent to 
\begin{eqnarray}\label{impfordbsum}
\left(\sum_{i=0}^{p-1} \omega^{-ki-1-k}\right)\varphi_{k+1}=-((\varphi_{\leq k})^p)_{k+1}.
\end{eqnarray}
Simplify the sum in the left side of (\ref{impfordbsum}) as
\begin{equation*}
\sum_{i=0}^{p-1} \omega^{-ki}=\left\{\begin{aligned}
p \ &,&\ p|k,\\	
0 \ &,&\ \ \mbox{else}.
\end{aligned}\right.\end{equation*}
Thus, $\varphi=\omega\cdot id+\sum_{i=2}^\infty \varphi_i\in G_\infty(n,K)$ is $p$-periodic if and only if 
\begin{eqnarray}\label{iffppcs}
	\varphi_{k+1}=-\frac{\omega}{p}(\varphi_{\leq k}^p)_{k+1},\ \mbox{if }\  p|k;\ \ (\varphi_{\leq k}^p)_{k+1}=0,\ \mbox{else}.
\end{eqnarray}
We assert that the condition $(\varphi_{\leq k}^p)_{k+1}=0,p\nmid k$ in (\ref{iffppcs}) can be removed, that is,
\begin{lemma}\label{lemimpp09}
Suppose $K,p$ and $\omega$ are as above, and $\varphi=\omega\cdot id+\sum_{i=2}^{pk}$ is $p$-periodic in $G_{pk}(n,K)$ for some $k\geq 1.$ Let $\varphi_{pk+1}=-\frac{\omega}{p}(\varphi^p)_{pk+1}.$ Then $((\varphi+\varphi_{pk+1})^p)_{r}=0$ for any $r=pk+2,pk+3,\cdots,pk+p.$
\end{lemma}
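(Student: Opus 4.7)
The plan is to prove Lemma \ref{lemimpp09} by an induction on $r\in\{pk+2,\ldots,pk+p\}$, driven by a short recursion for $(\phi^p)_r$ (writing $\phi:=\varphi+\varphi_{pk+1}$) that I will extract from the tautological identity $\phi\cdot\phi^p=\phi^p\cdot\phi$. The key structural fact is that $\phi_1=\omega\cdot id$ is a central scalar, which produces an invertibility condition $\omega-\omega^r\neq 0$ on exactly the range of $r$ in question.

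First I verify the base case $(\phi^p)_{\leq pk+1}=id$. Since $\phi_{\leq pk}=\varphi$ and the degree-$\leq pk$ part of $\phi^p$ depends only on $\phi_{\leq pk}$, the $p$-periodicity hypothesis on $\varphi$ gives $(\phi^p)_{\leq pk}=id$. The choice $\varphi_{pk+1}=-\frac{\omega}{p}(\varphi^p)_{pk+1}$ is precisely the solution to $(\phi^p)_{pk+1}=0$ arising from (\ref{impfordbsum}) at $k=pk$, so also $(\phi^p)_{pk+1}=0$.

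Next I extract the recursion. Applying Lemma \ref{zhankai01} at level $k+1\to r$ to both $\phi\cdot\phi^p$ and $\phi^p\cdot\phi$ and using $\phi_1=\omega\cdot id$ together with $(\phi^p)_1=\omega^p\cdot id=id$, the ``top'' linear terms simplify as $\phi_1(\phi^p)_r=\omega(\phi^p)_r$ and $(\phi^p)_r\phi_1=\omega^r(\phi^p)_r$ (because right-composition with $\omega\cdot id$ picks up $\omega^r$ on a degree-$r$ piece), while $\phi_r(\phi^p)_1$ and $(\phi^p)_1\phi_r$ both equal $\phi_r$ and so cancel across the equation. Equating yields
\[
(\omega-\omega^r)(\phi^p)_r=\bigl((\phi^p)_{\leq r-1}\phi_{\leq r-1}\bigr)_r-\bigl(\phi_{\leq r-1}(\phi^p)_{\leq r-1}\bigr)_r.
\]

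Finally I run the induction on $r\in\{pk+2,\ldots,pk+p\}$. For such $r$ one has $r\bmod p\in\{2,\ldots,p-1,0\}$, hence $\omega-\omega^r\neq 0$; and once $(\phi^p)_{\leq r-1}=id$ is known (starting from the base case above), both mixed compositions on the right collapse to $(\phi_{\leq r-1})_r=0$, forcing $(\phi^p)_r=0$ and propagating the hypothesis to $r+1$. The only delicate step is the bookkeeping inside Lemma \ref{zhankai01}, specifically the asymmetry between left-multiplication by $\omega\cdot id$ (contributing $\omega$) and right-composition with it (contributing $\omega^r$ on a degree-$r$ piece), which is exactly what produces the invertible factor $\omega-\omega^r$ driving the recursion.
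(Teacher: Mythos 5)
Your proof is correct, and it takes a recognizably different route from the paper's, though both rest on the same underlying mechanism. The paper introduces the inverse $\psi$ of $\phi^{p-1}$, uses the recursive inversion formula (Theorem \ref{inverseformula66}) to see that $\psi_{\leq pk+s}=\phi+\psi_{pk+s}$, and then expands $(\phi^{p-1}\psi)_{pk+s}=0$ and $(\psi\phi^{p-1})_{pk+s}=0$ via Lemma \ref{zhankai01} to get two linear relations $\omega^{p-1}\psi_{pk+s}+(\phi^p)_{pk+s}=0$ and $\omega^{-s}\psi_{pk+s}+(\phi^p)_{pk+s}=0$, whose incompatibility for $2\leq s\leq p$ forces $\psi_{pk+s}=(\phi^p)_{pk+s}=0$. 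You instead commute $\phi$ with its own power, extracting from $\phi\cdot\phi^p=\phi^p\cdot\phi$ and Lemma \ref{zhankai01} the single relation
\[
(\omega-\omega^r)(\phi^p)_r=\bigl((\phi^p)_{\leq r-1}\phi_{\leq r-1}\bigr)_r-\bigl(\phi_{\leq r-1}(\phi^p)_{\leq r-1}\bigr)_r,
\]
whose right side vanishes once $(\phi^p)_{\leq r-1}=id$ is known. In both arguments the engine is the asymmetry between left-composition with the scalar $\omega\cdot id$ (a factor $\omega$) and right-composition with it on a degree-$r$ piece (a factor $\omega^r$), and the admissible range $r=pk+2,\dots,pk+p$ is exactly where $r\not\equiv 1\pmod p$, i.e.\ where the resulting scalar is invertible. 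What your version buys is economy: it dispenses with the auxiliary series $\psi$, with Theorem \ref{inverseformula66}, and with the uniqueness-of-inverse bookkeeping, and it isolates the vanishing of $(\phi^p)_r$ in one equation rather than two. Your base case ($(\phi^p)_{\leq pk}=id$ from periodicity in $G_{pk}$, and $(\phi^p)_{pk+1}=0$ from the defining choice of $\varphi_{pk+1}$ via (\ref{impfordbsum})) and the verification that $\omega-\omega^r\neq 0$ on the stated range are both handled correctly.
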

\begin{proof}
We proof this lemma by induction on $r.$ Put $\phi=\varphi+\varphi_{pk+1}.$ When $r=pk+2,$ by (\ref{iffppcs}) we see that $(\phi^p)_{pk+1}=0,$ and hence $\phi$ is $p$-periodic in $G_{pk+1}(n,K).$ Let $\psi$ be the inverse of $\phi^{p-1}$ in $G_\infty(n,K).$ It is observed that $\psi_{\leq pk+2}=\phi+\psi_{pk+2}$ since $\phi^{p-1}$ has only one inverse in $G_{pk+1}(n,K).$ It follows from $\phi^{p-1}\psi=\psi\phi^{p-1}=id$ that
\begin{eqnarray}
	0 \!\!\!&=&\!\!\! (\phi^{p-1}\psi)_{pk+2}=(\phi^{p-1}(\phi+\psi_{pk+2}))_{pk+2}\nonumber \\
	\!\!\!&=&\!\!\! \varphi_1^{p-1}\psi_{pk+2}+(\phi^p)_{pk+2}\nonumber \\
	\!\!\!&=&\!\!\! \omega^{p-1}\psi_{pk+2}+(\phi^p)_{pk+2}, \label{imeqa65} \\
	0 \!\!\!&=&\!\!\! (\psi\phi^{p-1})_{pk+2}=((\phi+\psi_{pk+2})\phi^{p-1})_{pk+2}\nonumber \\
	\!\!\!&=&\!\!\! (\phi^p)_{pk+2}+\psi_{pk+2}\varphi_1^{p-1} \nonumber \\
	\!\!\!&=&\!\!\! \omega^{-2}\psi_{pk+2}+(\phi^p)_{pk+2}.\label{imeqa66}
\end{eqnarray}
Relying on (\ref{imeqa65}) and (\ref{imeqa66}), we find that $\psi_{pk+2}=(\phi^p)_{pk+2}=0.$ 

Now assume that $(\phi^p)_r=0$ for $r=pk+2,\cdots,pk+s-1,$ where $2<s\leq p.$ Then $\phi$ is $p$-periodic in $G_{pk+s-1}(n,K),$ and hence $\psi_{\leq pk+s}=\phi+\psi_{pk+s}.$ With the same argument
as above, we obtain
\begin{eqnarray}
0 \!\!\!&=&\!\!\! (\phi^{p-1}\psi)_{pk+s}=\omega^{p-1}\psi_{pk+s}+(\phi^p)_{pk+s}, \label{imeqa67} \\
0 \!\!\!&=&\!\!\! (\psi\phi^{p-1})_{pk+s}=\omega^{-s}\psi_{pk+s}+(\phi^p)_{pk+s}.\label{imeqa68}.
\end{eqnarray}
Then $\psi_{pk+s}=(\phi^p)_{pk+s}=0$ as $\omega^{s-1}\ne 1.$ 
\end{proof}
Thus, we have
\begin{theorem}\label{thm666pp}
	Suppose $K,p,\omega$ are as above.  Let $\varphi=\omega\cdot id+\sum_{i=2}^\infty\varphi_i\in G_\infty(n,K).$ Then $\varphi$ is $p$-periodic if and only if $\varphi_{pk+1}=-\frac{\omega}{p}((\varphi_{\leq pk})^p)_{pk+1}$ for any $k\geq 1.$
\end{theorem}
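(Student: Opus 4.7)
My plan is to handle the forward direction by direct specialization of~(\ref{iffppts56}), and the reverse direction by strong induction on the degree, where the key case reuses the inverse trick from the proof of Lemma~\ref{lemimpp09} but applied directly to the whole series $\varphi$ instead of to a polynomial truncation.

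For necessity, if $\varphi$ is $p$-periodic then~(\ref{iffppts56}) applied with $\varphi_1=\omega\cdot id$ and $k=pj$ reduces, exactly as in~(\ref{impfordbsum}) and~(\ref{iffppcs}), to $((\varphi_{\leq pj})^p)_{pj+1}+p\omega^{-1}\varphi_{pj+1}=0$, and solving for $\varphi_{pj+1}$ gives the stated constraint. For sufficiency, I would prove $(\varphi^p)_r=0$ for every $r\geq 2$ by strong induction on $r$. The base $r=2$ is automatic from $\varphi_1^p=id$ and fits uniformly into the argument below. In the inductive step, assume $\varphi^p\equiv id\bmod S_{r-1}$. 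If $r=pk+1$ with $k\geq 1$, use the recurrence~(\ref{relationpps33}) at $m=p$ to write $(\varphi^p)_{pk+1}=((\varphi_{\leq pk})^p)_{pk+1}+p\omega^{-1}\varphi_{pk+1}$, which vanishes by the constraint. If instead $p\nmid(r-1)$, let $\psi$ be the inverse of $\varphi^{p-1}$ in $G_\infty(n,K)$; the inductive hypothesis makes $\varphi$ a two-sided inverse of $\varphi^{p-1}$ in $G_{r-1}(n,K)$, so uniqueness of inverses forces $\psi_{\leq r-1}=\varphi_{\leq r-1}$. Expanding $(\varphi^{p-1}\psi)_r=0$ and $(\psi\varphi^{p-1})_r=0$ via Lemma~\ref{zhankai01} and the identity $\varphi^{p-1}\varphi=\varphi^p$ should reduce the two equations to
\begin{align*}
(\varphi^p)_r+\omega^{-1}(\psi_r-\varphi_r)&=0,\\
(\varphi^p)_r+\omega^{-r}(\psi_r-\varphi_r)&=0.
\end{align*}
Subtracting and using $\omega^{-1}\ne\omega^{-r}$, which is equivalent to $p\nmid(r-1)$, forces $\psi_r=\varphi_r$ and hence $(\varphi^p)_r=0$.

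The main point requiring care is this last computation. The paper's Lemma~\ref{lemimpp09} is stated for the truncated series $\varphi+\varphi_{pk+1}$ and concludes vanishing only at degrees $r\in\{pk+2,\dots,pk+p\}$; a literal invocation would leave a gap, since the nondiagonal contributions of $\varphi_{pk+2},\dots,\varphi_r$ to $(\varphi^p)_r$ would have to be controlled separately. Running the inverse trick directly on the full $\varphi$, as above, sidesteps this issue because the only ingredient needed is the eigenvalue separation $\omega^{-1}\ne\omega^{-r}$, which the hypothesis $m\notin\mathrm{char}\,K\cdot\mathbb{Z}$ supplies whenever $r\not\equiv 1\pmod p$.
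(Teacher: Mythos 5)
Your proof is correct, and the necessity half is exactly the paper's specialization of (\ref{iffppts56}) to $\varphi_1=\omega\cdot id$. For sufficiency, the paper declares the theorem an immediate consequence of (\ref{iffppcs}) and Lemma \ref{lemimpp09}, whereas you rerun the two-sided-inverse computation of that lemma directly on the full series inside a strong induction on the degree $r$. The core mechanism is identical -- compare $(\varphi^{p-1}\psi)_r=0$ with $(\psi\varphi^{p-1})_r=0$ and use $\omega^{-1}\ne\omega^{-r}$, which is exactly the condition $p\nmid(r-1)$ -- and both of your displayed equations check out via Lemma \ref{zhankai01}, since $(\varphi^{p-1})_1=\omega^{p-1}id$ acts by $\omega^{p-1}$ on the left and by $\omega^{(p-1)r}=\omega^{-r}$ on the right of a degree-$r$ term. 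Your criticism of the literal deduction is well taken: condition (\ref{iffppcs}) at degree $r=pj+s$ with $3\le s\le p$ requires $((\varphi_{\leq pj+s-1})^p)_{pj+s}=0$, while Lemma \ref{lemimpp09} only yields $((\varphi_{\leq pj+1})^p)_{pj+s}=0$; the discrepancy consists of cross terms between the unconstrained pieces $\varphi_{pj+2},\dots,\varphi_{pj+s-1}$ and the lower-order data (e.g.\ a $\varphi_2$-$\varphi_{pj+2}$ interaction at degree $pj+3$), which are not a priori zero and are not addressed by the lemma. (For $s=2$, and hence for all of $p=2$, the two truncations coincide and the paper's deduction is genuinely immediate.) Your version closes this gap at no extra cost, because the induction hypothesis $\varphi^p\equiv id\bmod S_{r-1}$ for the \emph{full} series already absorbs all intermediate terms; what each approach "buys" is therefore the same conclusion, but yours with a complete argument for $p\ge 3$.
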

\begin{proof}
It is an immediate consequence of (\ref{iffppcs}) and Lemma \ref{lemimpp09}. 
\end{proof}
\subsection{Examples}
\begin{example}\label{exmpp2}
	Consider the simplest case $n=1$ and $p=2.$ The recurrence relation for a $2$-periodic series $\varphi=-x+\sum_{k=2}^\infty a_kx^k\in G_\infty(1,\mathbb{Q})$ becomes $\varphi_{2k+1}=\frac{1}{2}((\varphi_{\leq 2k})^2)_{2k+1}.$ The first few relations are
	\begin{equation*}
		\begin{split}
			 a_3=& -a_2^2,\\
			 a_5=&2a_2^4-3a_2a_4,\\
			a_7=&-13a_2^6+18a_2^3a_4-2a_4^2-4a_2a_6,\\
			a_9=& 145 a_2^8-221 a_2^5a_4+50a_2^2a_4^2+35a_2^3a_6-5 a_2a_8-5 a_4 a_6,\\
		  a_{11}=&-2328 a_2^{10}+3879 a_2^7a_4-561 a_2^5a_6-1263 a_2^4a_4^2+61 a_2^3 a_8 \\
	     &	+171 a_2^2a_4 a_6  +55a_2a_4^3-6a_2a_{10}-3 a_6^2-6 a_4 a_8	,\\
			\cdots &
		\end{split}
	\end{equation*}
\end{example}
\begin{example}
Examine the case $n=1$ and $p=3.$ Let $\varphi=\omega x+\sum_{k=2}^\infty a_kx^k$ be a $3$-periodic series in $G_\infty(1,\mathbb{Q}[\omega]),$ where $\omega=e^{\frac{2\pi i}{3}}.$ The first few relations are given by
		\begin{equation*}
			\begin{split}
			a_4=&\frac{1-4\omega}{3}a_2^3-\frac{7+8\omega}{3}a_2a_3,\\
			 a_7=&(3\omega-1)a_3^3+(4-8\omega)a_2^2a_5+\frac{116-68\omega}{9}a_2^6-\frac{10+14\omega}{3}a_2a_6 \\
			 &  -\frac{13+11\omega}{3}a_3a_5-\frac{154+332\omega}{9}a_2^4a_3-\frac{79+74\omega}{9}a_2^3a_3^2,\\
			 a_{10}=& \frac{22586+5656\omega}{27}a_2^9+\frac{1461-14576\omega}{9} a_2^7a_3
		         -\frac{14446+14501\omega}{27}a_2^5a_3^2   \\  
		          &  -\frac{8891+12490\omega}{27}a_2^6a_3^2  +
		        \frac{8-1604\omega}{27} a_2^3a_3^3 -\frac{8101+8\omega}{27} a_2^4a_3^3  \\
		       &  +\frac{374+505\omega}{3}a_2a_3^4  + \frac{1247-182\omega}{3}a_2^5a_5 -\frac{2131+4319\omega}{9}a_2^3a_3a_5  \\
		       & +\frac{-338+80\omega}{9} a_2a_3^2a_5+ (7-15\omega) a_2a_5^2 -\frac{103+1154\omega}{9} a_2^4a_6  \\
		        & -\frac{545+361\omega}{9} a_2^2a_3a_6 -(7+22\omega) a_3^2a_6 -\frac{16+17\omega}{3}a_5 a_6 \\
		        & +(10-12\omega) a_2^2 a_8-\frac{19+14\omega}{3}a_3 a_8 -\frac{13+20\omega}{3}a_2 a_9 ,\\
		        \cdots &
		       \end{split}
	\end{equation*}
\end{example}
\begin{example}
Look at the case $n=1$ and $p=4.$ Let $\varphi=i x+\sum_{k=2}^\infty a_kx^k$ be a $4$-periodic series in $G_\infty(1,\mathbb{Q}[i]),$ where $i=\sqrt{-1}.$ The first two relations are given by
\end{example}
\begin{equation*}
\begin{split}
a_5=& \frac{-3+5i}{2}a_2^4+\frac{12+5i}{2}a_2^2 a_3-\frac{3 i}{2}a_3^2+(1-3 i) a_2 a_4, \\
a_9=& \frac{743+118i}{4}a_2^8-(35+578 i) a_2^6 a_3+\frac{-3504+1307 i}{8}a_2^4 a_3^2+\frac{172+215 i}{4}a_2^2 a_3^3 \\
 & -\frac{65i}{8}a_3^4+\frac{-345+23 i}{2} a_2^5 a_4+\frac{119+332i}{2} a_2^3 a_3 a_4+\frac{2+99 i}{2} a_2 a_3^2 a_4 \\
 & +\frac{33-25 i}{2} a_2^2 a_4^2+\frac{32-7 i}{2} a_3 a_4^2 -(35-25 i) a_2^3 a_6+(32+11 i) a_2 a_3 a_6 \\
 & -(1+5 i) a_4 a_6+\frac{24+27 i}{2} a_2^2 a_7-5 i a_3 a_7+(3-5 i) a_2 a_8, \\
 \cdots &
\end{split}
\end{equation*}
\section{An open problem}
At the end of this paper, we propose a conjecture about periodic series over $\mathbb{C}.$ Consider a special case in Example \ref{exmpp2} . Let 
$$f_a(x)=-x+ax^2+\sum_{k=1}^\infty c_ka^{2k}x^{2k+1}$$
be a $2$-periodic series in $G_\infty(1,\mathbb{C}),$ where $\{c_k\}_{k=1}^\infty$ is a sequence of integers with first few items 
$$c_1=-1,\ c_2=2,\ c_3=-13,\ c_4=145,\ c_5=-2328,\ c_6=49784,\ \cdots$$
See the sequence \cite{A107699} for more integers. Take $a=1.$ The series $\sum_{k=1}^\infty c_kx^k$ appears to be divergent for any $x\in\mathbb{C}^*.$ Moreover, we conjecture that
\begin{conjecture}
Let $p\geq 2$ and $\omega=\exp({\frac{2\pi i}{p}})\in\mathbb{C}.$ Suppose 
	$$\varphi=\omega\cdot id+\sum_{j=2}^p\varphi_j+\sum_{k=1}^\infty\varphi_{pk+1}\in G_\infty(n,\mathbb{C})$$
	 is $p$-periodic. If $\varphi\ne\varphi_1,$ then the radius of convergence of $\varphi$ is zero.
\end{conjecture}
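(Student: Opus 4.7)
The plan is to argue by contradiction: assume $\varphi$ has positive radius of convergence, so it extends to an analytic germ at $0$. The formal identity $\varphi^p=id$ then becomes an identity of analytic germs, and $\varphi$ is a finite-order analytic diffeomorphism fixing the origin.

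First, I would apply the classical Bochner averaging trick by setting
$$\phi=\frac{1}{p}\sum_{j=0}^{p-1}\varphi_1^{-j}\circ\varphi^j.$$
A short reindexing (using $\varphi^p=id$ and $\varphi_1^p=id$) shows $\phi\circ\varphi=\varphi_1\circ\phi$, and the linear part of $\phi$ is the identity, so $\phi\in G_\infty(n,\mathbb{C})$. Since $\varphi$ is analytic, so is each $\varphi^j$ and hence so is $\phi$; thus $\varphi=\phi^{-1}\circ(\omega\cdot id)\circ\phi$ holds in the analytic category.

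Next I would turn the structural hypothesis ``$\varphi_d=0$ whenever $d\notin\{1,2,\ldots,p\}\cup\{pk+1:k\geq 0\}$'' into constraints on $\phi$. Extracting the $d$-th homogeneous component of $\varphi=\phi^{-1}(\omega\phi)$ for each forbidden $d$ yields an infinite family of polynomial equations in the coefficients of $\phi$. The centralizer of $\omega\cdot id$ in $G_\infty(n,\mathbb{C})$, computed via the action on homogeneous components, equals the set of series supported in degrees $\equiv 1\pmod p$; multiplying $\phi$ on the left by an element of this centralizer does not change the conjugacy, so one may normalize $\phi$ to have no terms in degrees $\equiv 1\pmod p$ beyond the identity in degree $1$. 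The goal is then to show that, under this normalization, the infinite system of vanishing constraints admits $\phi=id$ as its only analytic solution, which forces $\varphi=\varphi_1$ and produces the desired contradiction.

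The heart of the difficulty lies in this final step, and it is the essential reason the statement is only conjectural: the same system of constraints admits a large family of formal solutions, parameterized by the free choice of $(\varphi_2,\ldots,\varphi_p)$ in the recipe of Theorem \ref{thm666pp}, so any proof must use convergence in an essential way. A plausible alternative strategy would bypass $\phi$ entirely and work directly with the recurrence
$$\varphi_{pk+1}=-\frac{\omega}{p}\bigl((\varphi_{\leq pk})^p\bigr)_{pk+1}$$
to prove a Gevrey-type lower bound of the form $\|\varphi_{pk+1}\|\geq C^k\,(k!)^{\alpha}$ for some $C,\alpha>0$ whenever $(\varphi_2,\ldots,\varphi_p)\ne 0$. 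Establishing such a sharp lower bound on a fully nonlinear combinatorial recurrence is the main analytic obstacle to either approach.
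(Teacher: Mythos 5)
The statement you are addressing is posed in the paper as an open \emph{conjecture}; the paper contains no proof of it, so there is nothing to compare your argument against, and the only question is whether your proposal actually closes the problem. It does not, and you say so yourself: the final step is missing, and that step is the entire content of the conjecture. Your preliminary reductions are correct --- the average $\phi=\frac{1}{p}\sum_{j=0}^{p-1}\varphi_1^{-j}\circ\varphi^{j}$ does satisfy $\phi\varphi=\varphi_1\phi$ with $\phi_1=id$, it preserves convergence, and the centralizer of $\omega\cdot id$ is indeed the set of series supported in degrees $\equiv 1\pmod p$ --- but they are essentially content-free for this problem. They show only that a convergent $p$-periodic germ with linear part $\omega\cdot id$ is analytically linearizable, i.e.\ lies in the set $\{\phi^{-1}(\omega\,\phi)\ :\ \phi\ \text{convergent},\ \phi_1=id\}$; the conjecture asks precisely whether any member of that set other than $\omega\cdot id$ can have the lacunary support $\{1,\dots,p\}\cup\{pk+1:k\ge 1\}$, so after your reduction you are facing a reformulation of the original question, not a smaller one. (Note also that the formal conjugacy $\varphi=\phi^{-1}\varphi_1\phi$ is already available with no convergence hypothesis by Theorem \ref{theomcogaking}, so the averaging buys you only the analyticity of one particular conjugator.)

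The genuine gap is the absence of any quantitative estimate. By Theorem \ref{thm666pp} the series in question is determined by the free data $(\varphi_2,\dots,\varphi_p)$ through $\varphi_{pk+1}=-\frac{\omega}{p}((\varphi_{\le pk})^p)_{pk+1}$, and a proof must show that whenever this data is nonzero the coefficients grow super-geometrically (for instance a bound of the shape $\|\varphi_{pk+1}\|\ge C^{k}(k!)^{\alpha}$, as you suggest, consistent with the growth of the sequence A107699 in the case $n=1$, $p=2$). Neither your normalization of $\phi$ modulo the centralizer nor the system of vanishing constraints in the forbidden degrees gives any handle on such a lower bound: those constraints form an infinite polynomial system satisfied by a positive-dimensional family of \emph{formal} solutions, and distinguishing the convergent ones requires asymptotic input that is nowhere supplied. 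As written, your proposal is an accurate account of why the problem is hard, not a proof; the conjecture remains open.
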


\emph{Acknowledgements.} The author would like to thank Yue Chen for many useful conversations and comments during this work.

%\bibliographystyle{plain}
%\bibliography{refe}

\end{document}